\newtheorem{theo}{Theorem}[section]
\newtheorem{lem}{Lemma}[section]
\newtheorem{prop}{Proposition}[section]
\newtheorem{rem}{Remark}[section]
\newtheorem{ass}{Assumption}[section]
\newcommand{\Nset}{\mathbb{N}}
\newcommand{\Rset}{\mathbb{R}}
\newcommand{\E}{\mbox{$\mathbb{E}$}}
\newcommand{\PP}{\mbox{$\mathbb{P}$}}
\newcommand{\CA}{\mathcal{A}}
\newcommand{\scr}[1]{\scriptscriptstyle #1}
\newcommand{\lpref}{\smash{\raisebox{3.5pt}{\!\!\!\begin{tabular}{c}$\hskip-4pt\scriptstyle\longleftarrow$ \\[-7pt]{\rm pref}\end{tabular}\!\!}}}
\newcommand{\petitlpref}{\smash{\raisebox{2.5pt}{\!\!\!\begin{tabular}{c}$\hskip-2pt\scriptscriptstyle\longleftarrow$ \\[-9pt]{$\scriptstyle\hskip 1pt\rm pref$}\end{tabular}\!\!}}}
\newcommand{\ttu}{\mathtt u}
\newcommand{\ttd}{\mathtt d}
\newcommand{\tail}{\mathcal{T}}
\newcommand{\xRightarrow}[2][]{\ext@arrow 0359\Rightarrowfill@{#1}{#2}}
\newcommand{\longhookrightarrow}{}
\DeclareRobustCommand{\longhookrightarrow}{\lhook\joinrel\relbar\joinrel\rightarrow}
\title{\bf Persistent random walks I : recurrence \emph{versus} transience}
\author[1]{{\bf Peggy Cénac}}
\author[2]{{\bf Basile de Loynes}}
\author[3]{{\bf Arnaud Le Ny}}
\author[1]{{\bf Yoann Offret}}
\affil[1]{Institut de Mathématiques de Bourgogne (IMB) - UMR CNRS 5584\\

Université de Bourgogne, 21000 Dijon, France}
\affil[2]{Institut de Recherche Mathématique Avancée (IRMA) - UMR CNRS 7501\\

Université de Strasbourg, 67084 Strasbourg, France}
\affil[3]{Laboratoire d'Analyse et de Mathématiques Appliquées (LAMA) - UMR CNRS 8050\\

Université Paris Est, 94010 Créteil Cedex, France}
\date{}
\begin{document}

\maketitle

\noindent
\rule{\linewidth}{.5pt}

\vspace{1em}

\noindent

{\small {\bf Abstract}\; We consider a walker on the line that at each step keeps the same direction with a probability which depends on the discrete time already spent in the direction the walker is currently moving. More precisely, the associated left-infinite sequence of jumps is supposed to be a Variable Length Markov Chain (VLMC) built from a probabilized context tree given by a double-infinite comb. These walks with memories of variable length can be seen as generalizations of Directionally Reinforced Random Walks (DRRW) introduced in \cite[Mauldin \& al., Adv. Math., 1996]{Mauldin1996} in the sense that the persistence times are anisotropic. We give a complete characterization of the recurrence and the transience in terms of the probabilities to persist in the same direction or to switch. We point out that the underlying VLMC is not supposed to admit any stationary probability. Actually, the most fruitful situations emerge precisely when there is no such invariant distribution. In that case, the recurrent and the transient property are related to the behaviour of some embedded random walk with an undefined drift so that the asymptotic behaviour depends merely on the asymptotics of the probabilities of change of directions unlike the other case in which the criterion reduces to a drift condition. Finally, taking advantage of this flexibility, we give some (possibly random and lacunar) perturbations results and treat the case of more general probabilized context trees built by grafting subtrees onto the double-infinite comb.}\\

\noindent
{\small {\bf Key words}\; Persistent random walk . Directionally reinforced random walk . Variable length Markov chain . Variable length memory . Probabilized context tree  . Recurrence . Transience . Random walk with undefined mean or drift - Perturbation criterion}\\

\noindent
{\small {\bf Mathematics Subject Classification (2000)}\;   60G50 . 60J15 . 60G17 . 60J05 . 37B20 . 60K35 . 60K37
}\\

\noindent
\rule{\linewidth}{.5pt}

\tableofcontents

\vspace{1em}


\section{Introduction}
\label{intro}

Classical random walks are usually defined from a sequence of independent and identically distributed (\emph{i.i.d.\@}) increments $\{X_k\}_{k\geq 1}$ by
\begin{equation}
\label{def-persist-part}
S_0:=0 \quad \textrm{ and } \quad S_n:=\displaystyle\sum_{k=1}^n X_k \quad \textrm{for all integers} \quad n \geq 1.
\end{equation}

When the jumps are defined as a (finite-order) Markov chain, a short memory in the dynamics of the stochastic paths is introduced and the random walk $\{S_{n}\}_{n\geq 0}$ itself is no longer Markovian. Such a process is called in the literature a \emph{persistent} random walk, a \emph{Goldstein-Kac} random walk or also a \emph{correlated} random walk. Concerning the genesis of the theory, we allude to \cite{Goldstein:51,Kac:74,renshaw81,weissbook94,eckstein00,weiss02} as regards the discrete-time situation but also its connections with the continuous-time telegraph process.

In this paper, we aim at investigating the asymptotic behaviour of one-dimensional random walk in particular random environment, where the latter is partially the trajectory of the walk itself (the past) which acts as a reinforcement. Roughly speaking, we consider a walker that at each step keeps the same direction (or switches) with a probability which directly depends on the time already spent in the direction the walker is currently moving. In order to take into account possibly infinite reinforcements, we need to consider a two-sided process of jumps $\{X_n\}_{n\in\mathbb Z}$ with a variable finite but possibly unbounded memory.

The following probabilistic presentation of the Variable Length Markov Chains (VLMC), initially introduced in \cite{Rissanen}, comes from \cite{ccpp}. Besides, we refer to \cite[pp. 117-134]{INSR:INSR062_1} for an overview on VLMC.


\subsection{VLMC and associated persistent random walks}

Introduce the set $\mathcal{L}= \CA^{-\Nset}$ of left-infinite words on a given alphabet $\CA$ and  consider a complete tree on this alphabet, \emph{i.e.}\@ a tree such that each node has $0$ or $\mathsf{card}(\CA)$ children, whose leaves $\mathcal{C}$ are words (possibly infinite) on $\mathcal A$. To each leaf $c\in\mathcal C$, called a context, is attached a probability distribution $q_{c}$ on $\mathcal A$. Endowed with this probabilistic structure, such a tree is named a probabilized context tree. Different context trees lead to different probabilistic impacts of the past and different dependencies. The model of (very) persistent random walks we consider in this paper corresponds to the double infinite comb which is given in Figure \ref{double-peigne}.

\begin{figure}[!ht]
\begin{center}
\includegraphics[width=0.8\linewidth]{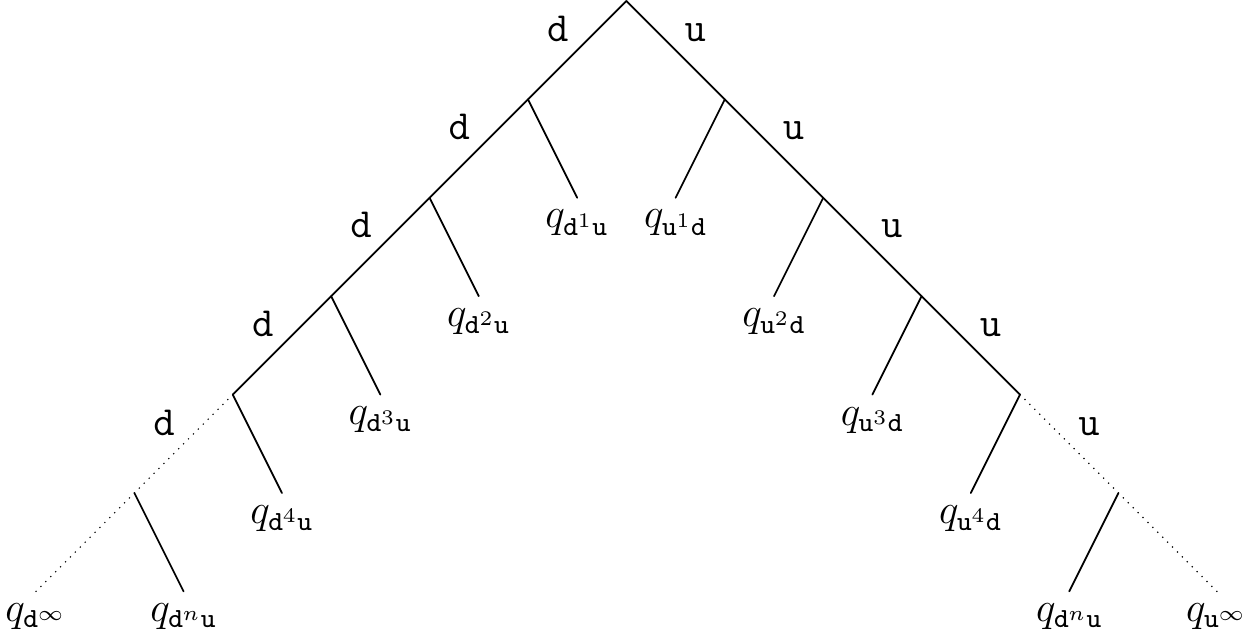}
\end{center}
\caption{\label{double-peigne} Probabilized context tree (double infinite comb).}
\end{figure}

For this particular context tree, the set of leaves $\mathcal{C}$ is defined from a binary alphabet $\mathcal{A}:=\{\ttu,\ttd\}$ consisting of a letter $\ttu$, for moving up, and a letter $\ttd$ for moving {down}. More precisely, it is given by
\begin{equation}\label{leaf}
\mathcal{C}:=\{\ttu^n\ttd : n\ge 1\}\cup \{\ttu^\infty\}\cup\{\ttd^n\ttu : n\ge 1\}\cup \{\ttd^\infty\},
\end{equation}
where $\ttu^n\ttd$ represents the word $\ttu\ldots \ttu\ttd$ composed with
$n$ characters $\ttu$ and one character $\ttd$. Note that the set of leaves contains also two infinite words $\ttu^\infty$ and $\ttd^\infty$. The distributions $q_{c}$ are then Bernoulli distributions and we set, for any $\ell\in\{\ttu,\ttd\}$ and $n\geq 1$,
\begin{equation}
\label{eq:double-peigne}
q_{\ttu^n \ttd}(\ttu) =  1-q_{\ttu^n\ttd}(\ttd) =: 1-\alpha^\ttu_n,\quad
q_{\ttd^n\ttu}(\ttd) = 1-q_{\ttd^n\ttu}(\ttu)=:1-\alpha^\ttd_n\quad \mbox{and}\quad q_{\ell^\infty}(\ell)=:1-\alpha_{\infty}^{\ell}.
\end{equation}
It turns out from equalities (\ref{changedir}) below that $\alpha_{n}^\ell$ given in (\ref{eq:double-peigne}) stands for the probability of changing letter after a run of length $n$ (possibly infinite) of letters $\ell$.

For a general context tree and any left-infinite word $U\in\mathcal L$, the prefix $\lpref(U)\in\mathcal C$ is defined as the shortest suffix of $U$, read from right to left, appearing as a leaf of the context tree. In symbols, it is given for any $\ell\in\{\ttu,\ttd\}$ and $n\geq 1$  by
\begin{equation}
 \lpref (\ldots \ttd\ttu^{n})=\ttu^n\ttd,\quad \lpref (\ldots \ttu\ttd^{n})=\ttd^n\ttu\quad\mbox{and}\quad \lpref (\ell^{\infty})=\ell^{\infty}.
\end{equation}
Then the associated VLMC, entirely determined by $\{q_{c} : c \in \mathcal{C}\}$ and $U_0\in\mathcal L$, is the $\mathcal{L}$-valued Markov chain $\{U_n\}_{n\geq 0}$  given, for any $\ell\in\mathcal A$ and $n\geq 0$, by the transitions
\begin{equation}
 \label{eq:def:VLMC}
 \PP(U_{n+1} = U_n\ell| U_n)=q_{\petitlpref (U_n)}(\ell).
 \end{equation}
Since $\mathcal L$ is naturally endowed with a one-sided space-shift corresponding to the left inverse of the usual time-shift, the whole past can be recovered from the state of $U_n$ for any $n \geq 0$. Thus we can introduce $X_n\in\mathcal A$ for any $n\in\mathbb Z$  as the rightmost letter of $U_n$. In particular, we can write
\begin{equation}
U_{n}=\cdots X_{n-1}X_{n}.
\end{equation}
Supposing the context tree is infinite,  the so called letter process $\{X_n\}_{n\in\mathbb Z}$ is not a finite-order Markov chain in general. Furthermore, given an embedding $\mathcal A\longhookrightarrow  G$ of the alphabet in an additive group $G$, the resulting random walk $\{S_n\}_{n\geq 0}$ defined in \eqref{def-persist-part} is no longer Markovian and somehow very persistent.

In the sequel, we make the implicit coding $\{\ttd,\ttu\}\simeq \{-1,1\}\subset\mathbb R$ ($\ttd$  for a descent and $\ttu$ for a rise) so that the letter process $\{X_{n}\}_{n\in\mathbb Z}$ represents the jumps in $\{-1,1\}$  of the persistent random walk $S$ taking its values in $\mathbb Z$. In other words, the persistent random walk is defined by the transitions probabilities of changing directions, that is, for all $n \geq 1$, $m \geq 0$
\begin{equation}\label{changedir}
	\mathbb P(S_{m+1}=S_{m}+ 1 \,|\, U_{m}=\cdots {\ttu} \ttd^{n})=\alpha^{\ttd}_{n}\quad\mbox{and}\quad 	\mathbb P(S_{m+1}=S_{m}- 1 \,|\, U_{m}=\cdots {\ttd} \ttu^{n})=\alpha^{\ttu}_{n}.
\end{equation}


\subsection{Overview of the results}

In this paper, necessary and sufficient conditions on the transitions probabilities $\alpha_{n}^{\ell}$ for the corresponding persistent random walk $S$ to be recurrent or transient are investigated.

Under some moment conditions on the persistence times (to be defined) whose distributions depend only on the $\alpha_n^\ell$'s, a Strong Law of Large Numbers (SLLN) as well as a Central Limit Theorem (CLT) are stated in \cite{peggy}.  These conditions imply that the underlying VLMC admits a unique stationary probability distribution.

In the following, we extend these results by providing a complete characterization of the recurrence and the transience. We also slightly relax the assumptions for the SLLN. These results are robust in the sense that they do not rely neither on the existence of any stationary probability nor on any reversibility property. A summary of the different situations is given in Table \ref{tableau}. The generalization of the CLT in \cite{peggy}, when we do not assume the square integrability of the running times, is a work being drafted for a forthcoming companion paper.

Basically,  when the random elapsed times between two changes of directions (also termed the persistence times) $\tau^{\ttu}$ and $\tau^{\ttd}$ are integrable (or at least one of them) the criterion for the recurrence reduces to a classical null drift condition. In the remaining case, the recurrence requires the distribution tails of the persistence times to be comparable. Thus, in the former case, the criterion involves the parameters $\alpha_n^\ell$ globally whereas it only depends on their asymptotics in the latter case. It follows, in the undefined drift context, that we can slightly perturb the symmetric configuration while remaining recurrent contrary to the well-defined drift case for which a perturbation of exactly one transition can lead to a transient behaviour. In fact, in the undefined drift case, the persistent random walk may stay recurrent or transient as long as the perturbation remains asymptotically controlled.

The proofs of these results rely on the study of the skeleton (classical) random walk
\begin{equation}
\{M_{n}\}_{n\geq 0}:= \{S_{T_{n}}\}_{n\geq 0},
\end{equation}
where $\{T_{n}\}_{n\geq 0}$ is the (classical) random walk of up-down breaking times. Then we use classical results, especially the structure theorem in \cite{Erickson2} of Erickson on random walks with undefined mean. These random walks together with the length of runs, also called the times of change of direction, are illustrated in Figure \ref{marche} at the beginning of Section \ref{def-marche-persist}.


\subsection{Related results}

First, some optimal stopping rules have been obtained in \cite{Allaart2001,Allaart2008} for the walks considered. Secondly, recurrence and transience as well as scaling limits have been widely investigated for  correlated random walks, that is, in our framework, persistent random walks with a probabilized context tree of finite depth (in particular, the increment process is a finite-order Markov chain). For instance, regarding persistent random walks in random environments (where the latter are the transition probabilities to change direction) CLT are proved in \cite{Toth84,Toth86}. Besides, recurrence and transience have been studied in \cite{renshaw83,Lenci} for correlated random walks in dimension two.

Closely related to our model, Directionally Reinforced Random Walks (DRRW) has been introduced in \cite{Mauldin1996} to model ocean surface wave fields. Those are nearest neighborhood random walks on $\mathbb Z^{d}$ keeping their directions during random times $\tau$, independently and identically drawn after every change of directions, themselves independently and uniformly chosen among the other ones. In dimension one, our model generalizes these random walks since asymmetrical transition probabilities $(\alpha_{n}^{\ttu})$ and $(\alpha_{n}^{\ttd})$ lead in general to running times $\tau^{\ttu}$ and $\tau^{\ttd}$ with distinct distributions.

Due to their symmetry, the recurrence criterion of DRRW in dimension one takes the simple form given in \cite[Theorem 3.1., p. 244]{Mauldin1996} and obviously we retrieve this particular result in our more general situation (see Proposition \ref{crit-rec} and Theorem \ref{undefinedrt}). Under some significant moment conditions on the running time, it is stated in \cite[Theorem 3.3. and Theorem 3.4., p. 245]{Mauldin1996} that these random walks are recurrent in $\mathbb Z^{2}$, when the waiting time between changes of direction is square integrable, and transient in $\mathbb Z^{3}$ under the weaker assumption of a finite expectation. In higher dimension, it is shown that it is always transient. In dimension three the latter result has been recently  improved in \cite[Theorem 2., p. 682]{Rainer2007} by removing the integrability condition. Also, the assertion in \cite{Mauldin1996} that the DRRW is transient when its embedded random walk of successive locations of change into the first direction is transient has been partially invalided in \cite[Theorem 4., p. 684]{Rainer2007}. Thus, even in the symmetric situation, the characterization of recurrence or transience is a difficult task. The case of an anisotropic persistent random walks built from VLMC in higher dimension is a work in progress and this paper is somehow a first step.

Besides, as regards the scaling limits of DRRW, we refer to \cite{Scalingdirectionaly1998,Rastegar:2012,Scalingdirectionaly2014} where are revealed diffusive and super-diffusive behaviours. We expect in a forthcoming paper to extend these results to the asymmetric situation and fill some gaps left open.

Finally, these walks are also somehow very similar to some continuous-time random motion, also called random flights, for which the changes of directions are Poisson random clocks. They have been extensively considered as generalizations of the Goldstein-Kac telegraph process. A sample of this field can be found for instance in \cite{Kolesnik:2005,Orsingher2007,Kolesnik:2008}.


\subsection{Outline of the article}

The paper is organized as follows. Section~\ref{def-marche-persist} is devoted to the presentation of the general framework including the main assumptions and notations summarized in Figure \ref{marche}. Section~\ref{Rec-et-Trans} is focused on the recurrence and transience. It is first shown that the recurrent or transient behaviour of $S$ can be deduced from the oscillating or drifting behaviour of the skeleton classical random walk $M$. From this follows an almost sure comparison lemma involving stochastic domination in the context of couplings. The main result of this paper requires to discriminate two situations depending on whether the persistent random walk admits an almost sure drift or not. The bulk of the work in the latter case consists of giving a characterization in a form as simple as possible by applying the results in \cite{Erickson2}. To this end we need to reduce the problem to the study of a derived random walk which can be viewed as a randomized version of the embedded random walk $M$. Finally, in Section \ref{perturbations} we give some perturbation results in the undefined drift case and also an {\it a.s.\@} comparison result weakening the assumptions of the former one providing all the needed informations in the specific undefined drift context.

\section{Settings and assumptions}
\label{def-marche-persist}

\setcounter{equation}{0}

\begin{figure}[!ht]
\begin{center}
\includegraphics[width=0.80\linewidth]{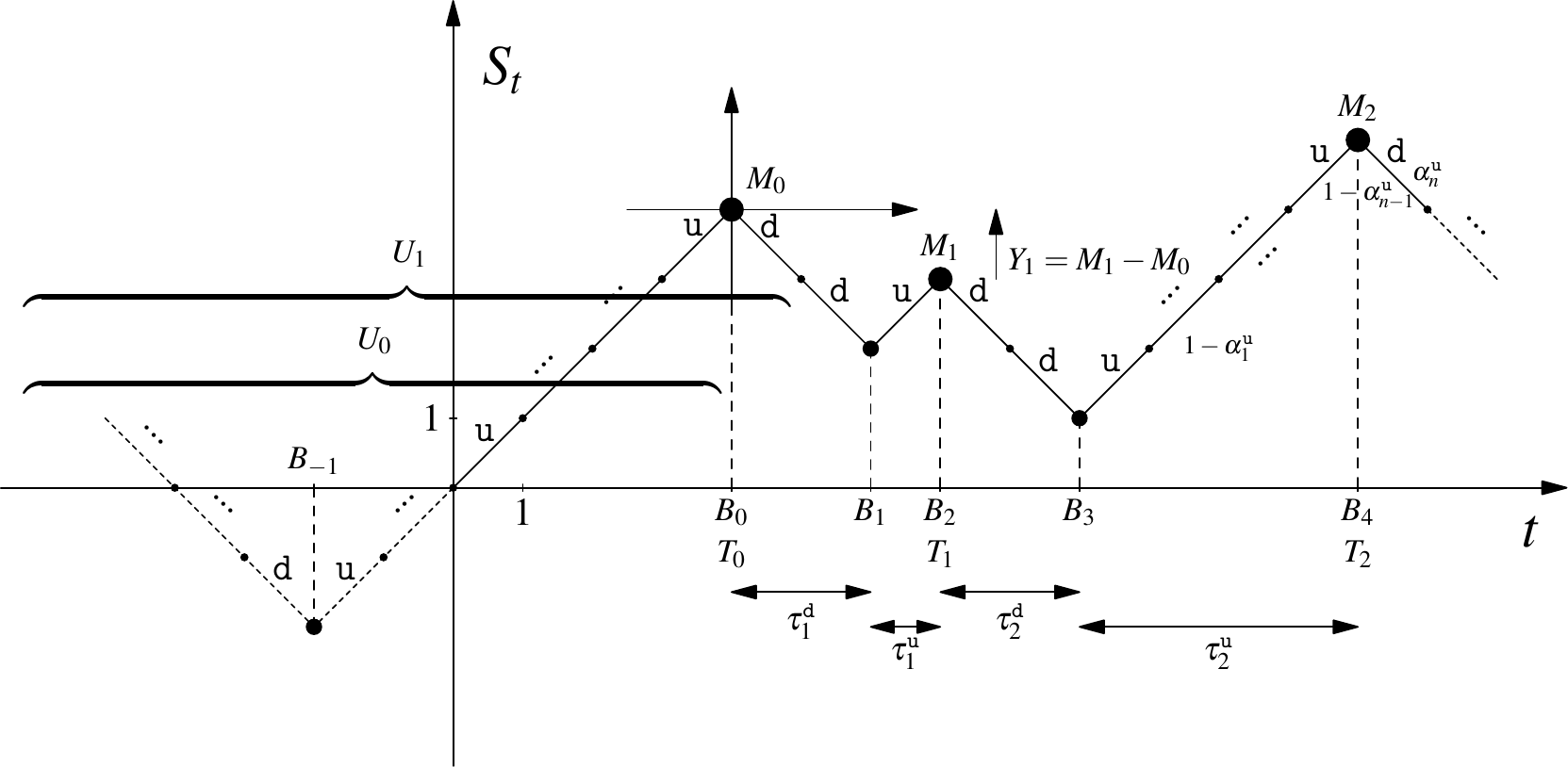}
\end{center}
\caption{Persistent random walk}
\label{marche}
\end{figure}

Foremost, we refer to Figure \ref{marche} that illustrates our notations and assumptions by a realization of a linear interpolation of our persistent random walk $\{S_n\}_{n\geq 0}$, built from a double infinite comb given in Figure \ref{double-peigne} with probabilities of changing directions $(\alpha_{n}^{\ttu})$ and $(\alpha_{n}^{\ttd})$ as in (\ref{eq:double-peigne}).


\subsection{Renewal hypothesis}

In order to avoid the trivial situations in which the persistent random walk can stay almost surely frozen in one of the two directions,  we make the assumption below. It roughly means that the underlying VLMC denoted by $U$ given in (\ref{eq:def:VLMC}) has a renewal property. We refer to \cite{Comets,Gallo} for more refinement about regenerative schemes and related topics.

\begin{ass}[renewal set and finiteness of the length of runs]\label{A1} For any initial distribution $\mu$ on $\mathcal L$,
\begin{equation}\label{a0}
\mathbb P_{\mu}\left(\lpref(U_{n})=\ttd\ttu\quad i.o.\right)
=
\mathbb P_{\mu}\left((X_{n-1},X_{n})=(\ttu,\ttd) \quad i.o.\right)=1,
\end{equation}
where the abbreviation {\it i.o.\@}  means that the events depending on $n$ occur for infinitely many $n$. It turns out that this hypothesis is equivalent to one of the following statements:
\begin{enumerate}
\item For any $\ell\in\{\ttu,\ttd\}$ and $r\geq 1$, $\alpha_{\infty}^{\ell}\neq 0$ and
\begin{equation}\label{a1}
\prod_{k=r}^{\infty}(1-\alpha_{k}^\ell)=0.
\end{equation}
\item For any $\ell\in\{\ttu,\ttd\}$ and  $r\geq 1$, $\alpha_{\infty}^{\ell}\neq 0$ and, either there exists $n\geq r$ such that $\alpha_{n}^{\ell}=1$, or
\begin{equation}\label{a1b}
\sum_{k=r}^{\infty} \alpha_{k}^{\ell}=\infty.
\end{equation}
\end{enumerate}
Furthermore, since between two up-down events $(X_{n-1},X_{n})=(\ttu,\ttd)$ there is at least one down-up event $(X_{n-1},X_{n})=(\ttd,\ttu)$ and reciprocally, assumption (\ref{a0}) can be alternatively stated switching $\ttu$ and $\ttd$.
\end{ass}

This assumption disallows a too strong reinforcement, that is a too fast decreasing rate for the probabilities of change of directions. Sequences of transition satisfying this assumption are said to be {admissible}. Below are given typical examples for which the assumption holds or fails.


\subsubsection*{Examples of admissible or inadmissible sequences}

{\it
Denoting for any integer $p\geq 0$, the $p$-fold composition of the logarithm function by
\begin{equation}
\log_{[p]}:=\log\circ\cdots\circ\log,
\end{equation}
it is obvious that (\ref{a1b}) holds for instance when
\begin{equation}\label{ex2moins}
\frac{1}{n\log(n)\cdots\log_{[p]}(n)}\underset{n\to\infty}{=}{\mathcal O}(\alpha_{n}^{\ell}),
\end{equation}
where $\mathcal O$ stands for the big O notation. In contrast, it fails when there exists $\varepsilon>0$ such that
\begin{equation}
\alpha_{n}^{\ell}\underset{n\to\infty}{=}
{\mathcal O} \left(\frac{1}{n\log(n)\cdots\log_{[p-1]}(n)(\log_{[p]}(n))^{1+\varepsilon}}\right).
\end{equation}}


\subsection{Persistence times and embedded random walk}

Thereafter, under Assumption \ref{A1}, we can consider the sequence of almost surely finite breaking times $(B_n)_{n\geq 0}$ (see Figure \ref{marche}) defined inductively for all $n\geq 0$ by
\begin{equation}
B_0=\inf\left\{k\geq 0 : X_{k}\neq X_{k+1}\right\}\quad\mbox{and}\quad
B_{n+1}=\inf\left\{k>B_{n} : X_{k}\neq X_{k+1}\right\}.
\end{equation}
For the sake of simplicity, throughout this paper, we deal implicitly with the conditional probability
\begin{equation}\label{condition}
\mathbb P(\;\cdot\;\mid \lpref (U_{1})=\ttd\ttu)=\mathbb P(\;\cdot\;\mid (X_{0},X_{1})=(\ttu,\ttd)).
\end{equation}
In particular, $B_{0}=0$ with probability one. In other words, we suppose  that the initial time is a so called up-down breaking time. Thanks to Assumption \ref{A1} and to the renewal properties of the chosen variable length Markov chain $U$, the latter condition can be done without loss of generality and has no fundamental importance in the long time behaviour of $S$ . Furthermore, the length of rises $(\tau^\ttu_{n})$ and of descents $(\tau^\ttd_{n})$ are then defined for all $n\geq 1$ by
\begin{equation}
\label{tau}
\tau_n^{\ttd}:=B_{2n-1}-B_{2n-2}
\quad\mbox{and}\quad
\tau_n^{\ttu} :=B_{2n}-B_{2n-1}.
\end{equation}
Due to \cite[Proposition 2.6.]{peggy} for instance, $(\tau_n^\ttd)$ and $(\tau_n^\ttu)$ are independent sequences of {\it i.i.d.\@} random variables. Besides, an easy computation leads to their distribution tails and expectations, given and denoted for any $\ell\in\{\ttu,\ttd\}$ and all integers $n\geq 1$ by
\begin{equation}
\label{def-tail}
\mathcal T_{\ell}(n):=\PP(\tau^{\ell}_{1} \geq n)=\prod_{k=1}^{n-1}(1-\alpha_{k}^\ell)\quad\mbox{and}\quad \Theta_\ell(\infty):=\E[\tau^{\ell}_{1}]=\sum_{n=1}^{\infty}\prod_{k=1}^{n-1}(1-\alpha_{k}^\ell).
\end{equation}

At this stage, we exclude for simplicity the situation of almost surely constant length of runs which trivializes the analysis of the underlying persistent random walk. Besides, note that the persistent random walk $S$ can be equivalently defined either via the distribution tails $\tail_\ell$ or the probabilities $(\alpha_n^\ell)$ with $\ell\in\{\ttd,\ttu\}$. Thus, depending on the context, we will choose the more suitable description of the parameters of the model.

In order to deal with a more tractable random walk built with possibly unbounded but  \emph{i.i.d.\@} increments, we consider the underlying skeleton random walk $\{M_{n}\}_{n\geq 0}$ associated with the even breaking times random walk $\{T_{n}\}_{n\geq 0}$ (up-down breaking times) defined for all $n\geq 0$ by
\begin{equation}
\label{marche-paire}
T_{n}:=\sum_{k=1}^{n}(\tau_{k}^{\ttd}+\tau_{k}^{\ttu})\quad\mbox{and}\quad
M_{n}:=S_{T_{n}}=\sum_{k=1}^{n}(\tau^{\ttu}_{k}-\tau^{\ttd}_{k}).
\end{equation}
The so called (almost sure) drift $\mathbf d_{\scr T}$ of the increasing random walk $T$ always exists whereas that of $M$, denoted by $\mathbf d_{\scr M}$, is well-defined whenever one of the expectations $\Theta_{\ttu}(\infty)$ or $\Theta_{\ttd}(\infty)$ defined in the right-hand side of (\ref{def-tail}) is finite. They are  given by
\begin{equation}
\label{drift-def1}
{\mathbf d_{\scr T}}=\Theta_{\ttd}(\infty)+\Theta_{\ttu}(\infty)\in[0,\infty]\quad\mbox{and}\quad {\mathbf d_{\scr M}}:=\Theta_{\ttu}(\infty)-\Theta_{\ttd}(\infty)\in\Rset\cup\{\pm \infty\}.
\end{equation}
Furthermore, if the drift of $M$ is well-defined, we can set (extended by continuity whenever necessary)
\begin{equation}
\label{drift-def2}
{\mathbf d}_{\scr S}:=\frac{\Theta_{\ttu}(\infty)-\Theta_{\ttd}(\infty)}{\Theta_{\ttu}(\infty)+\Theta_{\ttd}(\infty)}\in [-1,1].
\end{equation}
In regards to the convergence (\ref{LFGN1}), the latter quantity is naturally termed the (almost sure) drift of $S$. For the record, we give some relevant examples for which the mean of the length of run is finite or not.

In the following, two non-negative sequences $(a_n)$ and $(b_n)$ are said to be of the same order, and we shall write it $a_n  \asymp  b_n$, when there exists a positive constant $c$ such that for  $n$ sufficiently large,
\begin{equation}
c^{-1} a_n \leq b_n \leq c a_n.
\end{equation}


\subsubsection*{Typical examples of running time means}

{\it
Simple calculations ensure that the expectation $\Theta_{\ell}(\infty)$ is finite whenever there exists  $p\geq 0$ and $\varepsilon>0$ such that either there exists $n\geq 1$ with $\alpha_{n}^{\ell}=1$ or, for $n$ sufficiently large,
\begin{equation}\label{ex1}
\alpha_{n}^{\ell}\geq \frac{1}{n}+\frac{1}{n\log(n)}+\cdots+\frac{1}{n\log(n)\cdots\log_{[p-1]}(n)}+\frac{1+\varepsilon}{n\log(n)\cdots\log_{[p]}(n)}.
\end{equation}
On the contrary, $\Theta_{\ell}(\infty)$ is infinite whenever there exists $p\geq 0$ such that, for all $n\geq 1$ we have $\alpha_{n}^{\ell}\neq 1$ and for $n$ large enough,
\begin{equation}\label{ex2}
\alpha_{n}^{\ell}\leq \frac{1}{n}+\frac{1}{n\log(n)}+\cdots+\frac{1}{n\log(n)\cdots\log_{[p]}(n)}.
\end{equation}
The proofs of these claims follow from the computation of the asymptotics of the distribution tails. More precisely, when the transitions belongs to $[0,1)$ and are given, for positive parameters $\lambda_{0},\lambda_{1},\cdots,\lambda_{p}$ and large $n$, by
\begin{equation}\label{ex4}
\alpha_{n}^{\ell}:=\frac{\lambda_0}{n}+\frac{\lambda_{1}}{n\log(n)}+\cdots+\frac{\lambda_{p}}{n\log(n)\cdots\log_{[p]}(n)},
\end{equation}
then,
\begin{equation}\label{tailasymp}
\tail_{\ell}(n) \asymp \frac{1}{n^{\lambda_{0}}(\log(n))^{\lambda_{1}}\cdots(\log_{[p]}(n))^{\lambda_{p}}},
\end{equation}
and the claims follow.
}

\section{Recurrence and transience}
\label{Rec-et-Trans}

\setcounter{equation}{0}

First recall that a stochastic process $\{M_n\}_{n\geq 0}$ on the grid  $\mathbb Z$ is said to be recurrent if for any $x\in\mathbb Z$,
\begin{equation}
\sup\{n\geq 0 : M_n=x\}=\infty\quad a.s.,
\end{equation}
and it is said to be transient (respectively transient to $\infty$, transient to $-\infty$) if
\begin{equation}
\lim_{n\to\infty} |M_n|=\infty\quad\left(\mbox{resp.}\quad
\lim_{n\to\infty} M_n=\infty,\quad\lim_{t\to\infty} M_n=-\infty\right)\quad a.s..
\end{equation}

Below, it is shown that the recurrence or the transience property of the persistent random walk $S$ is completely determined by the oscillating or drifting behaviour of the underlying skeleton random walk $M$ for which suitable criteria are available. Also, a comparison theorem, involving only the distribution tails of the length of runs, is given.

\subsection{Equivalent criteria and comparison lemma}

The following structure theorem, stated in \cite[Theorem 1., Chap. XII and Theorem 4., Chap. VI]{Feller} for instance, describes the long time behaviour of  a one-dimensional random walk.
\begin{theo}[type of a one-dimensional random walk]\label{RW}
Any random walk $M$ on $\mathbb R$ which is not almost surely constant satisfies either
\begin{equation}
\limsup_{n\to\infty} M_{n}=\infty\quad\mbox{and}\quad\liminf_{n\to\infty} M_{n}=-\infty\quad a.s.\quad(\mbox{said to be oscillating}),
\end{equation}
or
\begin{equation}
\lim_{n\to\infty} M_{n}=\infty\quad(\mbox{resp.}\; -\infty)\quad a.s.\quad(\mbox{said to be drifting to}\;\pm\infty).
\end{equation}
Moreover, when the drift of $M$ denoted by ${\mathbf d}_{\scr M}$ is well-defined, then
$M$ is oscillating if and only if ${\mathbf d}_{\scr M}=0$ whereas $M$ is drifting to $\infty$ (resp. $-\infty$) if and only if $\mathbf d_{\scr M}>0$ (resp. $\mathbf d_{\scr M}<0$). In any case,
\begin{equation}
\lim_{n\to\infty}  \frac{M_{n}}{n}={\mathbf d}_{\scr M}\quad a.s..
\end{equation}
\end{theo}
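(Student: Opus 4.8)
The plan is to prove the two assertions separately, using only that the increments $Y_k$ of $M$, defined by $M_n=\sum_{k=1}^n Y_k$, are i.i.d. and that $M$ is not almost surely constant, i.e. that $Y_1$ is not a.s. zero.

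First I would settle the dichotomy by the Hewitt--Savage zero--one law. Since permuting finitely many of the $Y_k$ leaves $M_n$ unchanged for all $n$ beyond the support of the permutation, both $\limsup_n M_n$ and $\liminf_n M_n$ are measurable with respect to the exchangeable $\sigma$-field; hence there are constants $\ell\leq L$ in $[-\infty,+\infty]$ with $\liminf_n M_n=\ell$ and $\limsup_n M_n=L$ almost surely. To exclude finite values I would use a shift argument: the sequence $(M_{n+1}-Y_1)_{n\geq 0}=(\sum_{k=2}^{n+1}Y_k)_{n\geq 0}$ is a copy of $M$ that is independent of $Y_1$, so its $\limsup$ equals $L$ a.s.; taking $\limsup$ in $M_{n+1}=Y_1+(M_{n+1}-Y_1)$ gives $L=Y_1+L$, which on the event where $L$ is finite forces $Y_1=0$ a.s., contradicting non-degeneracy. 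The same applies to $\ell$, so $\ell,L\in\{-\infty,+\infty\}$. Among the pairs with $\ell\leq L$ only $(-\infty,+\infty)$, $(+\infty,+\infty)$ and $(-\infty,-\infty)$ survive, which are respectively the oscillating case and the two drifting cases.

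Next, assuming ${\mathbf d}_{\scr M}$ is well-defined, I would establish the law of large numbers $M_n/n\to{\mathbf d}_{\scr M}$ a.s. When ${\mathbf d}_{\scr M}$ is finite this is Kolmogorov's strong law; when exactly one of $\Theta_{\ttu}(\infty)$, $\Theta_{\ttd}(\infty)$ is infinite, so that ${\mathbf d}_{\scr M}=\pm\infty$, it is the standard extension of the strong law valid when only one of $\E[Y_1^+]$, $\E[Y_1^-]$ is infinite. This immediately yields $M_n\to+\infty$ when ${\mathbf d}_{\scr M}>0$ and $M_n\to-\infty$ when ${\mathbf d}_{\scr M}<0$, matching the sign of the drift to the two drifting regimes of the dichotomy, and it also gives the final identity $\lim_n M_n/n={\mathbf d}_{\scr M}$ in every case.

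The delicate point, and the step I expect to be the main obstacle, is the equivalence between ${\mathbf d}_{\scr M}=0$ and oscillation. The difficulty is that $M_n/n\to 0$ does not by itself rule out a sublinear drift to $\pm\infty$, so the law of large numbers is insufficient here. To close this case I would invoke the Chung--Fuchs recurrence criterion: a one-dimensional random walk with finite and vanishing mean is recurrent, that is, it visits every neighborhood of the origin infinitely often. Recurrence is incompatible with either drifting regime, so it forces oscillation; together with the previous paragraph, which shows oscillation can occur only when ${\mathbf d}_{\scr M}=0$, this yields the claimed equivalence. Alternatively, one may quote Feller's fluctuation theory directly, as it packages the dichotomy together with this recurrence criterion.
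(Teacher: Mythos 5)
Your proposal is correct, but it cannot take ``the same approach as the paper'' for a simple reason: the paper does not prove this theorem at all. It is quoted as a classical structure result with a pointer to Feller (Theorem 1, Chap.\ XII and Theorem 4, Chap.\ VI), whose own derivation rests on fluctuation theory and ladder variables. Your argument is a genuine, self-contained alternative assembled from three classical ingredients: the Hewitt--Savage zero--one law combined with the shift identity $L=Y_{1}+L$ to force $\limsup_n M_n,\liminf_n M_n\in\{-\infty,+\infty\}$ and hence the trichotomy; Kolmogorov's strong law (and its one-sided extension when exactly one of $\E[Y_1^{+}]$, $\E[Y_1^{-}]$ is infinite) to identify the sign of ${\mathbf d}_{\scr M}$ with the two drifting regimes and to get $M_n/n\to{\mathbf d}_{\scr M}$; and Chung--Fuchs recurrence to settle the one direction the law of large numbers cannot, namely that ${\mathbf d}_{\scr M}=0$ forces oscillation rather than a sublinear drift --- you correctly flag this as the delicate step, and your implicit use of the trichotomy to upgrade the three one-way implications into equivalences is sound. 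What each route buys: the paper's citation is economical and leans on fluctuation theory, which also yields finer information (ladder structure, Spitzer-type identities) used nowhere else in the paper; your route avoids that machinery entirely and is arguably more transparent, at the cost of invoking Chung--Fuchs, itself a nontrivial theorem, so the proof is elementary only modulo that input. Either way, your write-up would serve as a complete substitute for the citation.
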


Our strategy to study recurrence \emph{versus} transience consists in reducing the determination of the type of the persistent random walk $S$ defined in the previous section by studying some properties of the underlying embedded random walk $M$ associated with the up-down breaking times given in (\ref{marche-paire}) and illustrated in Figure \ref{marche}. This is made clear by the following lemma.

\begin{lem}\label{eqrec}
The persistent random walk $S$ is either recurrent or transient according as the type (in the sense of Theorem \ref{RW}) of the classical random walk $M$ of even breaking times. More precisely, one has:
\begin{enumerate}[a)]
\item[1)] $S$ is recurrent if and only if $M$ is oscillating.
\item[2)] $S$ is transient to $\infty$ (resp. $-\infty$) if and only if $M$ is drifting to $\infty$ (resp. $-\infty$).
\end{enumerate}
\end{lem}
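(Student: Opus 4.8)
The plan is to transfer the oscillating/drifting dichotomy of Theorem \ref{RW} from $M$ to $S$ by controlling separately the $\limsup$ and the $\liminf$ of the trajectory of $S$. Under Assumption \ref{A1} all breaking times $B_n$ are a.s. finite and $T_n=B_{2n}\uparrow\infty$, so the path of $S$ is the concatenation of the descents and rises of lengths $\tau_n^{\ttd},\tau_n^{\ttu}$. Its local maxima occur at the even breaking times, with $S_{B_{2n}}=M_n$, while its local minima occur at the odd ones, with $S_{B_{2n-1}}=m_n:=M_{n-1}-\tau_n^{\ttd}$. Since on each cycle $[B_{2n-2},B_{2n}]$ the walk stays in the integer interval $[m_n,\max(M_{n-1},M_n)]$ and takes every intermediate value, one obtains the two identities $\limsup_k S_k=\limsup_n M_n$ and $\liminf_k S_k=\liminf_n m_n$.

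The key step, on which the whole lemma hinges, is to recognise the sequence of local minima as a second random walk of the \emph{same type} as $M$. Setting $\widetilde M_n:=S_{B_{2n+1}}=m_{n+1}$, a direct computation gives the increments $\widetilde M_n-\widetilde M_{n-1}=\tau_n^{\ttu}-\tau_{n+1}^{\ttd}$. Because the rises $(\tau_n^{\ttu})$ and descents $(\tau_n^{\ttd})$ form independent i.i.d.\@ sequences, consecutive increments involve pairwise disjoint variables, so they are i.i.d.\@ and share the common law of $\tau^{\ttu}-\tau^{\ttd}$, that is, exactly the step distribution of $M$. Since the type appearing in Theorem \ref{RW} is an almost sure property of the step law, the walk $\widetilde M$ oscillates, drifts to $+\infty$, or drifts to $-\infty$ precisely when $M$ does.

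It then remains to read off the three cases. If $M$ oscillates, then $\limsup_n M_n=+\infty$ while $\liminf_n m_n=\liminf_n\widetilde M_n=-\infty$, so $S$ oscillates; being a nearest-neighbour walk, a discrete intermediate value argument shows it crosses every level infinitely often, hence $S$ is recurrent. If $M$ drifts to $+\infty$, then so does $\widetilde M$, whence $\liminf_k S_k=\liminf_n\widetilde M_n=+\infty$ and $S_k\to+\infty$, i.e.\@ $S$ is transient to $+\infty$; the case $-\infty$ is symmetric and can also be obtained by switching the roles of $\ttu$ and $\ttd$, as permitted by Assumption \ref{A1}.

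The main obstacle is precisely what the companion walk $\widetilde M$ resolves: the naive identity $M_n=S_{T_n}$ only controls $S$ along the subsequence $(T_n)$, hence only its $\limsup$, and one cannot a priori rule out that the downward excursions of depth $\tau_n^{\ttd}$ pull $S$ back to a fixed level infinitely often. This is genuinely delicate in the undefined-mean regime where $\E[\tau^{\ttd}]=\infty$ and no law of large numbers for $\tau_n^{\ttd}/n$ is available. Exhibiting the local minima themselves as a random walk with the same step law as $M$ bypasses any tail estimate on $\tau^{\ttd}$ and yields the control on $\liminf_k S_k$ for free.
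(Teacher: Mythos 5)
Your proof is correct and takes essentially the same route as the paper's: the paper also resolves the delicate drifting-to-$+\infty$ case by observing that the skeleton walk at odd (down-up) breaking times is, up to an independent random variable, equal in distribution to $M$ --- exactly your $\widetilde M$ --- and then concludes via the trichotomy of Theorem \ref{RW}. Your write-up simply makes explicit the sandwich of the trajectory between the local maxima $M_n$ and local minima $\widetilde M_n$ that the paper's proof states more tersely.
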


\begin{proof}[Proof of Lemma~\ref{eqrec}] First, when $M$ is oscillating, $S$ is recurrent. Next if $M$ is drifting to $-\infty$, then $S$ is transient to $-\infty$ since the trajectory of $S$ is always under the broken line formed by the $M_n$'s. Finally, from Theorem \ref{RW}, the oscillating and drifting to $\pm\infty$  behaviour form, up to a null set, a  partition of the universe. Therefore, it only remains to prove that if $M$ is drifting to $\infty$, then $S$ is transient to $\infty$. It is worth to note that we assume the initial time to be an up-down breaking time as in Figure~\ref{marche} so that the geometric argument considered above does not apply straightforwardly. Nonetheless, the expected assertion follows by remarking that, up to an independent random variable, the skeleton random walk at odd breaking times (down-up breaking times) is equal in distribution to $M$ which ends the proof of the lemma.
\end{proof}

Let us end this part with a comparison lemma helpful to study some perturbed persistent random walks (see Section \ref{perturbations}) but also to prove the extended SLLN in Proposition \ref{crit-rec}. It means that if the rises of $S$ are stochastically smaller than those of $\widetilde S$ and the opposite for the descents, then $S$ is stochastically smaller than $\widetilde S$.

\begin{lem}[comparison lemma]\label{comp}
Let $S$ and $\widetilde S$ be two persistent random walks  such that the associated distribution tails of their length of runs satisfy for all $n\geq 1$,
\begin{equation}\label{comp1}
\tail_{\ttu}(n) \leq \widetilde {\tail}_\ttu(n) \quad\mbox{and}\quad\tail_{\ttd}(n) \geq\widetilde{\tail}_{\ttd}(n).
\end{equation}
Then there exists a coupling, still denoted by $(S,\widetilde S)$ up to a slight abuse, such that for all $n \geq 1$,
\begin{equation}\label{comp2}
S_n\leq \widetilde S_n\quad a.s..
\end{equation}
\end{lem}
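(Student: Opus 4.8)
The plan is to translate the tail inequalities into stochastic domination between the run-length laws, realize the two walks on a common probability space through a monotone (quantile) coupling, and then establish a purely deterministic pathwise comparison. Since $\tail_\ttu(n)=\PP(\tau^\ttu_1\ge n)$, the hypothesis $\tail_\ttu(n)\le\widetilde\tail_\ttu(n)$ says exactly that the rises of $S$ are stochastically smaller than those of $\widetilde S$, while $\tail_\ttd(n)\ge\widetilde\tail_\ttd(n)$ says that the descents of $S$ are stochastically larger. I would then introduce two independent families $(V_k)_{k\ge1}$ and $(W_k)_{k\ge1}$ of i.i.d.\ uniform variables and define all four run-length sequences as the corresponding generalized inverse distribution functions evaluated at these uniforms. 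Because generalized inverses are nondecreasing and respect stochastic domination, this produces a coupling in which $(\tau^\ttu_k)$ and $(\tau^\ttd_k)$ retain their prescribed independent i.i.d.\ laws (and likewise for the tilde walk), while almost surely, for every $k$,
$\tau^\ttu_k\le\widetilde\tau^\ttu_k$ and $\tau^\ttd_k\ge\widetilde\tau^\ttd_k$.

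It then suffices to prove the deterministic statement: for two alternating $\pm1$ walks started at $0$ with an initial descent, whose descent lengths satisfy $\tau^\ttd_k\ge\widetilde\tau^\ttd_k$ and whose rise lengths satisfy $\tau^\ttu_k\le\widetilde\tau^\ttu_k$, one has $S_n\le\widetilde S_n$ for all $n$. Writing $d_n$ for the number of descending steps among the first $n$ increments of $S$, each increment being $\pm1$ gives $S_n=n-2d_n$, so the claim is equivalent to $d_n\ge\widetilde d_n$ for all $n$. A step-by-step induction on $n$ is awkward, because the two walks change direction at different instants; instead I would pass to the inverse counting functions, letting $\rho(m)$ be the time of the $m$-th descending step of $S$. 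The equivalence $\{d_n\ge m\}=\{\rho(m)\le n\}$ shows that $d_n\ge\widetilde d_n$ for all $n$ is the same as $\rho(m)\le\widetilde\rho(m)$ for all $m$, i.e.\ that each walk's $m$-th descent occurs no later for $S$ than for $\widetilde S$.

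The crux is the closed form $\rho(m)=m+\sum_{j=1}^{k(m)-1}\tau^\ttu_j$, where $k(m)=\min\{i\ge1:\tau^\ttd_1+\cdots+\tau^\ttd_i\ge m\}$ is the index of the descent block containing the $m$-th down-step. Since the partial sums of the descent lengths of $S$ dominate those of $\widetilde S$, the threshold $m$ is reached at an earlier (or equal) block for $S$, so $k(m)\le\widetilde k(m)$; combined with the termwise bound $\tau^\ttu_j\le\widetilde\tau^\ttu_j$ and the nonnegativity of the rise lengths, this yields $\sum_{j<k(m)}\tau^\ttu_j\le\sum_{j<\widetilde k(m)}\widetilde\tau^\ttu_j$, hence $\rho(m)\le\widetilde\rho(m)$, which closes the argument. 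I expect the main obstacle to be precisely the bookkeeping forced by the desynchronization of the two sets of breaking times: the naive increment-by-increment comparison breaks down at the moments when $\widetilde S$ has already turned upward while $S$ is still descending, and it is the reformulation through the down-count $d_n$ and its inverse $\rho(m)$, rather than a time-local induction, that makes the monotonicity transparent. A secondary point to verify is that Assumption~\ref{A1} guarantees all run lengths are almost surely finite for both walks, so the coupled trajectories are well defined for every $n$.
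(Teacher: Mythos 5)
Your proof is correct and takes essentially the same route as the paper: a monotone (quantile) coupling of the run lengths via generalized inverse distribution functions evaluated at common uniform variables, followed by the pathwise comparison $S_n \le \widetilde S_n$. The only difference is that you make explicit, through the down-step count $d_n$ and its inverse $\rho(m)$, the deterministic monotonicity step that the paper simply asserts ("since they are entirely determined by these lengths of runs"), which is a welcome addition rather than a divergence.
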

This lemma can equivalently be stated in terms of the transition probabilities of change of directions, denoted respectively by $(\alpha^{\ell}_n)$ and $(\widetilde \alpha^{\ell}_n)$ for any $\ell\in\mathcal \{\ttd,\ttu\}$, with the same conclusions, by considering instead of \eqref{comp1} the equivalent hypothesis requiring that for all $n\geq 1$,
\begin{equation}\label{comp3}
{\widetilde \alpha}^{\ttu}_{n}\leq \alpha_{n}^{\ttu}\quad\mbox{and}\quad\widetilde \alpha^{\ttd}_{n}\geq \alpha_{n}^{\ttd}.
\end{equation}

\begin{proof}[Proof of Lemma \ref{comp}]
Let $(\tau_{n}^{\ell})$ and $(\widetilde{\tau}_{n}^{\ell})$ be the associated lengths of runs  and  $G_{\ell}$ and $\widetilde{G}_{\ell}$ be the left continuous inverse of their cumulative distribution functions. Then inequalities in (\ref{comp1}) yield that for all $x\in [0,1]$,
\begin{equation*}
G_{\ttu}(x)\leq \widetilde G_{\ttu}(x)\quad\mbox{and}\quad
G_{\ttd}(x)\geq \widetilde G_{\ttd}(x).
\end{equation*}
Then we can construct a coupling (see for instance the book \cite[Chap. 1.3.]{Thorisson}) of the lengths of runs such that, with probability one, for all $n\geq 1$,
\begin{equation*}
\tau_{n}^{\ttu}\leq \widetilde \tau_{n}^{\,\ttu}\quad\mbox{and}\quad \tau_{n}^{\ttd}\geq \widetilde \tau_{n}^{\,\ttd}.
\end{equation*}
To be more specific, considering two independent sequences $(V^{\ell}_{n})$ of uniform random variables on $[0,1]$, we can set
\begin{equation*}
\tau_{n}^{\ell}:=G_{\ell}(V_{n}^{\ell})\quad\mbox{and}\quad \tilde{\tau}_{n}^{\ell}:=\widetilde G_{\ell}(V_{n}^{\ell}).
\end{equation*}
Consequently, there exists a coupling of the persistent random walks $S$ and $\widetilde S$ satisfying inequality  (\ref{comp2}) since they are entirely determined by these lengths of runs.
\end{proof}

With respect to the considerations above, it seems natural to distinguish two cases providing whether one of the mean length of runs between $\Theta_\ttu(\infty)$ and $\Theta_\ttd(\infty)$ given in (\ref{def-tail}) is finite or both are infinite. The former case correspond to the situation in which the drift of $M$ is well-defined and is considered in the next section. The latter case, when the definition of the drift in \eqref{drift-def2} is meaningless, is considered apart in Section \ref{und-drift}.


\subsection{Well-defined Drift case}
\label{wd-drift}

In this part, assume that the drift is well defined, that is $\Theta_\ttu(\infty)$ or $\Theta_\ttd(\infty)$ is finite so that $\mathbf d_{\scr S}$ given in \eqref{drift-def2} is well-defined. We will highlight a Strong Law of Large Number (SLLN) for the persistent random walk and we shall prove a null drift recurrence criterion similarly to the classical context of random walks with integrable jumps.

\begin{prop}[recurence criterium and SLLN] \label{crit-rec}
The persistent random walk $S$  is recurrent if and only if $\mathbf d_{\scr S}=0$ and transient otherwise. Furthermore, one has
\begin{equation}\label{LFGN1}
\lim_{t\to\infty}\frac{S_{n}}{n}=\mathbf d_{\scr S}\in[-1,1]\quad a.s..
\end{equation}
\end{prop}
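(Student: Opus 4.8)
The recurrence criterion will be an immediate consequence of the two structural results already at our disposal. The plan is first to invoke Lemma~\ref{eqrec}, which reduces the type of $S$ to the type of the embedded walk $M$ defined in~\eqref{marche-paire}: $S$ is recurrent exactly when $M$ oscillates, and transient to $\pm\infty$ exactly when $M$ drifts to $\pm\infty$. Since we are in the well-defined drift case, one of the expectations $\Theta_\ttu(\infty)$, $\Theta_\ttd(\infty)$ is finite, so $M$ has a well-defined drift $\mathbf d_{\scr M}=\Theta_\ttu(\infty)-\Theta_\ttd(\infty)$ and Theorem~\ref{RW} applies: $M$ oscillates iff $\mathbf d_{\scr M}=0$ and drifts to $\pm\infty$ iff $\pm\mathbf d_{\scr M}>0$. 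Because $\mathbf d_{\scr T}=\Theta_\ttu(\infty)+\Theta_\ttd(\infty)>0$ is always strictly positive (each run has length at least one), the sign of $\mathbf d_{\scr S}=\mathbf d_{\scr M}/\mathbf d_{\scr T}$ in~\eqref{drift-def2} coincides with that of $\mathbf d_{\scr M}$; in particular $\mathbf d_{\scr S}=0\iff\mathbf d_{\scr M}=0$. Chaining these equivalences yields the recurrence/transience dichotomy.

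For the SLLN I would avoid the drift $\mathbf d_{\scr M}$ altogether and work directly with the two directional occupation counters. Let $U_m$ and $D_m$ be the numbers of up- and down-steps performed by $S$ up to time $m$, so that $m=U_m+D_m$ and $S_m=U_m-D_m$, whence $S_m/m=(U_m-D_m)/(U_m+D_m)$. The key observation is that these counters are squeezed between consecutive partial sums of the \emph{i.i.d.} run lengths: writing $T_n\le m<T_{n+1}$ for the (a.s.\ finite, by Assumption~\ref{A1}) up-down breaking times, the completed up-runs give $\sum_{k=1}^{n}\tau_k^\ttu\le U_m\le\sum_{k=1}^{n+1}\tau_k^\ttu$, and likewise for $D_m$ with $\tau^\ttd$.

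Next I would apply the strong law of large numbers to each \emph{i.i.d.} sequence $(\tau_k^\ell)$, in the extended form allowing an infinite mean, to get $\frac1n\sum_{k=1}^n\tau_k^\ell\to\Theta_\ell(\infty)\in(0,\infty]$ almost surely. Since $T_n\ge n\to\infty$ forces $n=n(m)\to\infty$ as $m\to\infty$, the sandwiching above promotes this to $U_m/n\to\Theta_\ttu(\infty)$ and $D_m/n\to\Theta_\ttd(\infty)$ in $(0,\infty]$. Dividing numerator and denominator of $S_m/m$ by $n$ then gives
\begin{equation*}
\frac{S_m}{m}=\frac{U_m/n-D_m/n}{U_m/n+D_m/n}\underset{m\to\infty}{\longrightarrow}\frac{\Theta_\ttu(\infty)-\Theta_\ttd(\infty)}{\Theta_\ttu(\infty)+\Theta_\ttd(\infty)}=\mathbf d_{\scr S},
\end{equation*}
the limit being meaningful precisely because at least one expectation is finite (when exactly one is infinite one divides through by the diverging term to recover the boundary value $\pm1$).

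I expect the only genuine obstacle to be the degenerate boundary case in which exactly one mean, say $\Theta_\ttd(\infty)$, is infinite, so that $\mathbf d_{\scr S}=-1$. Here the usual trick of filling the gaps between breaking times by controlling $T_{n+1}-T_n$ fails, because the increments of $T$ have infinite mean and are \emph{not} $o(n)$. The occupation-counter formulation is designed to bypass this: the minority direction has finite mean and grows linearly in $n$, while the total time $m\ge T_n$ is dominated by the majority direction whose partial sums grow superlinearly, so the minority fraction $U_m/m\to0$ and $S_m/m\to-1$ without any control on individual run lengths. A minor point to verify along the way is that the conditioning~\eqref{condition} fixing $B_0=0$ does not affect these almost sure limits, exactly as in the proof of Lemma~\ref{eqrec}.
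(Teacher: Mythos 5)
Your treatment of the recurrence/transience dichotomy coincides with the paper's: both arguments chain Lemma~\ref{eqrec} with Theorem~\ref{RW} and observe that $\mathbf d_{\scr S}$ and $\mathbf d_{\scr M}$ have the same sign, so that part is identical. For the SLLN, however, you take a genuinely different route, and it is correct. The paper splits into two cases: when $\Theta_{\ttu}(\infty)$ and $\Theta_{\ttd}(\infty)$ are both finite it does not reprove anything but cites the SLLN of \cite{peggy}, and in the boundary case (say $\Theta_{\ttu}(\infty)=\infty$, $\Theta_{\ttd}(\infty)<\infty$, so $\mathbf d_{\scr S}=1$) it introduces, for each $\varepsilon>0$, a truncated walk $S^{\,\varepsilon}$ obtained by setting $\alpha_n^{\ttu,\varepsilon}=1$ for $n\geq N$ with $N$ so large that the (now well-defined, interior) drift of $S^{\,\varepsilon}$ exceeds $1-\varepsilon$, and then invokes the coupling of the comparison Lemma~\ref{comp} to get $S^{\,\varepsilon}_n\leq S_n$ a.s., hence $\liminf_n S_n/n\geq 1-\varepsilon$. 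Your occupation-counter argument --- sandwiching $U_m$ and $D_m$ between $\sum_{k=1}^{n}\tau_k^{\ell}$ and $\sum_{k=1}^{n+1}\tau_k^{\ell}$ for $T_n\leq m<T_{n+1}$, then applying the SLLN for nonnegative \emph{i.i.d.}\@ variables in its extended (possibly infinite-mean) form --- treats the interior and boundary cases uniformly and is self-contained: it needs neither the external reference nor the coupling lemma, and the boundary value $\pm 1$ falls out of the same computation instead of requiring a separate truncation device. What the paper's route buys in exchange is economy (the interior case is already in the literature) and a first illustration of the comparison Lemma~\ref{comp}, which is the workhorse of its later perturbation results. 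Two small points to tighten in your write-up: the fact that $n(m)\to\infty$ follows from the a.s.\@ finiteness of every $T_n$ (Assumption~\ref{A1}), not from the bound $T_n\geq n$, which only guarantees that $n(m)$ is well defined; and the extended SLLN with infinite mean, while standard (truncate and apply monotone convergence), should be stated as the input it is.
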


\begin{proof}
First remark that, in this setting, the recurrence criterion is a straightforward consequence of Theorem \ref{RW} and Lemma \ref{eqrec}. Besides, the law of large numbers (\ref{LFGN1}) when $\Theta_{\ttu}(\infty)$ and $\Theta_{\ttd}(\infty)$ are both finite is already proved in \cite[Proposition 4.5, p. 33]{peggy} under the assumption that $\mathbf d_{\scr S}\in(-1,1)$. Then by symmetry it only  remains to consider the situation with $\Theta_{\ttu}(\infty)=\infty$ and $\Theta_{\ttd}<\infty$ (and thus
$\mathbf d_{\scr S}=1$). Note that it is sufficient to prove
the minoration in (\ref{LFGN1}) since $S_{n}\leq n$ for all $n\geq 0$. To this end, we shall construct for any $0<\varepsilon<1$ a persistent random walk  $S^{\,\varepsilon}$ such that for all
$n\geq 0$,
\begin{equation}\label{minor}
S_n^{\,\varepsilon}\leq S_n\quad\mbox{and}\quad \liminf_{t\to\infty}\frac{S^{ \,\varepsilon}_{n}}{n}\geq 1-\varepsilon\quad a.s..
\end{equation}
More specifically, introduce for any $\ell\in\{\ttd,\ttu\}$ the truncated mean of runs defined for all $m\geq 1$ by
\begin{equation}\label{truncatedmeaninit}
\Theta_{\ell}(m):=\sum_{n=1}^{m}\prod_{k=1}^{n-1}(1-\alpha_{k}^\ell),
\end{equation}
and choose $N\geq 1$ sufficiently large so that
\begin{equation}\label{approx}
\frac{\Theta_{\ttu}(N)-\Theta_{\ttd}(\infty)}{\Theta_{\ttu}(N)+\Theta_{\ttd}(\infty)}\geq
1-\varepsilon.
\end{equation}
Then, we can consider a persistent random walk $S^{\,\varepsilon}$ associated with the transitions
\begin{equation*}
\label{suite-admissibles-recurrence}
\alpha_{n}^{\ttu,\varepsilon}:=
\left\{
\begin{array}{lll}
\alpha_{n}^{\ttu},&\mbox{when} & 1\leq n\leq N-1,\\
1,&\mbox{when}& n\geq N,\\
\end{array}
\right.
\quad\mbox{and}\quad
\alpha_n^{\ttd,\varepsilon}:= \alpha_n^{\ttd}.
\end{equation*}
Finally, noting that the drift of $S^{\,\varepsilon}$ is nothing but the left-hand side of (\ref{approx}), the latter SLLN together with the comparison Lemma \ref{comp} lead to (\ref{minor}) and end the proof.
\end{proof}


\subsection{Undefined drift case}
\label{und-drift}

In this section we consider the remaining case in which both $\Theta_\ttu(\infty)$ and $\Theta_\ttd(\infty)$ are infinite.  In this case, the information given by the expectation of one increment of $M$ is no longer sufficient to discriminate between transience and recurrence.

In fact, following Erickson \cite[Theorem 2., p. 372]{Erickson2}, the oscillating or drifting behaviour of the skeleton random walk $M$ is characterized through the cumulative distribution function of its increments $(Y_{n}):=(\tau^{\ttu}_{n}-\tau^{\ttd}_{n})$, especially if the mean is undefined. Roughly speaking, the criterion of Erickson together with the lemma \ref{eqrec} imply that the persistent random walk $S$ is recurrent if the distribution tails of the positive and negative parts of an increment are comparable, transient otherwise.

However, Erickson's criterion does not suit to our context since the distribution of an increment is not explicitly given by the parameters of the model, but merely by the convolution of two \emph{a priori} known distributions. More precisely, this criterion requires to settle whether the quantities
\begin{equation}\label{erick1}
J_{+}:=\sum_{n=1}^{\infty}\frac{n\mathbb P(Y_{1}=n)}{\sum_{k=1}^{n} \mathbb P(Y_{1}\leq -k)}\quad\mbox{and}\quad J_{-}:=\sum_{n=1}^{\infty}\frac{n\mathbb P(Y_{1}=-n)}{\sum_{k=1}^{n} \mathbb P(Y_{1}\geq  k)},
\end{equation}
are finite or infinite which is clearly not convenient in concrete cases. To circumvent these difficulties, we consider a sequence $(\xi_n)$ of non-degenerate {\it i.i.d.}\@ Bernoulli random variables with parameter $p\in(0,1)$, independent of the sequences of length of runs $(\tau_n^\ttu)$ and $(\tau_n^\ttd)$. Then we introduce the following classical random walk defined for all $n\geq 0$ by
\begin{equation}\label{Mxi-rw}
M^{\scr \xi}_n := \sum_{k=1}^n Y_k^{\scr \xi},\quad  \mbox{with}\quad Y_k^{\scr \xi}:=\xi_k \tau_k^\ttu - (1-\xi_k) \tau_k^\ttd.
\end{equation}
In fact, the original random walk $M$ is built from a strict alternation of rises and descents. The random walk $M^{\scr \xi}$ can be seen as a randomized version of $M$ in the sense that the choice of a descent or a rise in the alternation is determined by flipping a coin. It turns out that the randomly modified random walk $M^{\scr \xi}$ and the embedded random walk $M$ share the same behaviour in the sense of Theorem \ref{RW} above.

When the drift is well-defined this result is only true in the symmetric situation $p=1/2$. The fact that it holds for arbitrary $p\in(0,1)$ can be disturbing at one sight but it is due to the general fact that the position of a one-dimensional random walk without mean (undefined or infinite) is primarily given by the last big jumps. For more details one can consult \cite{Kesten,kesten:prob,kesten:sol}.

The proof of the following lemma is postponed to the end of this part.

\begin{lem}[randomized random walk]\label{lemme} In the setting of Theorem \ref{RW} the random walks $M$ and
$M^{\scr \xi}$ are of the same type.
\end{lem}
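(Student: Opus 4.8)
The plan is to read the type of each walk off Erickson's structure theorem and then check that the two walks return the same verdict. In the regime of this section both $\Theta_\ttu(\infty)$ and $\Theta_\ttd(\infty)$ are infinite, and since $\PP(\tau^\ttd_1\leq a)>0$ for some $a$ already forces $\E[(\tau^\ttu_1-\tau^\ttd_1)^+]=\infty$, both $Y_1=\tau^\ttu_1-\tau^\ttd_1$ and $Y_1^{\scr \xi}=\xi_1\tau^\ttu_1-(1-\xi_1)\tau^\ttd_1$ have an undefined mean. Hence Erickson \cite[Theorem 2.]{Erickson2} applies to $M$ and to $M^{\scr \xi}$, and in both cases the trichotomy oscillating / drifting to $+\infty$ / drifting to $-\infty$ is governed solely by which of the two quantities $J_+,J_-$ of \eqref{erick1} are finite (they cannot both be finite under an undefined mean). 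So it suffices to show that $M$ and $M^{\scr \xi}$ share the same finiteness pattern for $(J_+,J_-)$.

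First I would treat $M^{\scr \xi}$, where everything is explicit. As $\tau^\ttu_1,\tau^\ttd_1\geq 1$, the increment $Y_1^{\scr \xi}$ never vanishes and its two sides decouple: for $n\geq 1$ one has $\PP(Y_1^{\scr \xi}=n)=p\,\PP(\tau^\ttu_1=n)$ and $\sum_{k=1}^n\PP(Y_1^{\scr \xi}\leq -k)=(1-p)\,\Theta_\ttd(n)$, with the symmetric identities on the other side. Substituting into \eqref{erick1}, the constants $p$ and $1-p$ factor out of the sums and are irrelevant to convergence, so the type of $M^{\scr \xi}$ is determined --- \emph{for every} $p\in(0,1)$ --- by the finiteness of
\begin{equation*}
\Sigma_+:=\sum_{n\geq 1}\frac{n\,\PP(\tau^\ttu_1=n)}{\Theta_\ttd(n)}\qquad\text{and}\qquad \Sigma_-:=\sum_{n\geq 1}\frac{n\,\PP(\tau^\ttd_1=n)}{\Theta_\ttu(n)}.
\end{equation*}

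The core of the argument is then to prove that $J_+$ and $J_-$ for the genuine walk $M$ are of the same nature as $\Sigma_+$ and $\Sigma_-$, despite the law of $Y_1$ being an inexplicit convolution. For the denominators, $(\tau^\ttd_1-\tau^\ttu_1)^+\leq\tau^\ttd_1$ gives $\sum_{k=1}^n\PP(Y_1\leq-k)\leq\Theta_\ttd(n)$, while keeping only the smallest atom $i_0$ of $\tau^\ttu_1$ yields the reverse bound up to the fixed shift $i_0$, absorbed because $\Theta_\ttd(n)\to\infty$; hence $\sum_{k=1}^n\PP(Y_1\leq-k)\asymp\Theta_\ttd(n)$, and symmetrically on the positive side, so that $J_+\asymp\sum_n w(n)\,\PP(Y_1=n)$ with $w(n):=n/\Theta_\ttd(n)$. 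For this last sum I would expand $\PP(Y_1=n)=\sum_{j\geq1}\PP(\tau^\ttd_1=j)\,\PP(\tau^\ttu_1=n+j)$, exchange the summations, and exploit that $w$ is nondecreasing: the substitution $m=n+j$ turns the inner sum into $\sum_{m>j}w(m-j)\,\PP(\tau^\ttu_1=m)$, which is at most $\Sigma_+$ after summing $\PP(\tau^\ttd_1=j)$ over $j$ (giving $J_+\lesssim\Sigma_+$); conversely, retaining only the smallest atom $j_0$ of $\tau^\ttd_1$, together with $w(m-j_0)/w(m)\to 1$, yields $\Sigma_+\lesssim J_+$. Thus $J_+$ is finite iff $\Sigma_+$ is, and likewise $J_-$ iff $\Sigma_-$, so $M$ and $M^{\scr \xi}$ have the same finiteness pattern and therefore the same type.

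The delicate point --- and the reason the randomized walk is worth introducing --- is precisely this transfer from the convolution law of $Y_1$ to the two marginals: the monotonicity of $m\mapsto m/\Theta_\ell(m)$ lets the up-atoms be shifted by the down-variable without loss, and the shift washes out only because the truncated means diverge, i.e. only in the undefined-mean regime. This is also what explains the caveat preceding the lemma: once a drift exists, $\Theta_\ell(\infty)<\infty$, the shifts no longer disappear and the weights $p,1-p$ genuinely tilt the balance, so the identity of types survives only at $p=1/2$.
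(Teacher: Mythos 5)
Your proof is correct, but it takes a genuinely different route from the paper's. The paper deliberately refuses to apply Erickson's criterion to $M$ itself --- it argues that the convolution law of $Y_1=\tau^\ttu_1-\tau^\ttd_1$ makes the quantities in \eqref{erick1} intractable --- and instead proves the lemma probabilistically: for $p=1/2$ the drifting direction follows from the identities $M^{\scr \xi}\overset{\mathcal L}{=}M^{\scr 1-\xi}$ and $M=M^{\scr \xi}+M^{\scr 1-\xi}$, while the oscillating direction uses Kesten's ``last big jump'' theorem and a zero--one law; the case of general $p\in(0,1)$ is then bootstrapped from the symmetric one via the subordinated walk $\{M_{qn}\}_{n\geq 0}$, a counting-process representation of $M^{\scr \xi}$ (the identity \eqref{randomwalk2}), and the strong law of large numbers. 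You instead attack the convolution head-on: you check both increments have undefined mean, observe that the Erickson functionals of $M^{\scr \xi}$ are explicit multiples of $J_{\ttu\mid\ttd}$ and $J_{\ttd\mid\ttu}$ from \eqref{jgen0}, and then show that the functionals of $M$ have the same finiteness, using $\sum_{k=1}^n\PP(Y_1\leq -k)\asymp\Theta_\ttd(n)$ together with the monotonicity of $n\mapsto n/\Theta_\ttd(n)$ and the divergence of the truncated means to wash out the shift by the smallest atom. This is more elementary (pure series manipulation) and in fact proves more than the lemma: it establishes directly that Erickson's criterion for $M$ reduces to the marginal quantities $J_{\ttu\mid\ttd}$, $J_{\ttd\mid\ttu}$, which is exactly what Theorem \ref{undefinedrt} needs, so in your approach the randomized walk $M^{\scr \xi}$ becomes dispensable for the recurrence/transience dichotomy (the paper's Kesten-type machinery is still needed for the almost sure statement \eqref{limsup}, but that lies outside the lemma). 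Two small points you should make explicit: the monotonicity of $n/\Theta_\ttd(n)$ deserves a line of proof (it is precisely the computation \eqref{positif} of the paper, relying on the monotonicity of $\tail_\ttd$), and one should note that $\PP(Y_1\leq -1)>0$ (true since $\tau^\ttd_1$ has unbounded support) so that the finitely many small-$n$ terms of $J_+$ are finite and the comparison of denominators for large $n$ suffices.
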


Therefore, in order to obtain the oscillating or drifting property of $M$ we can apply the criterion of Erickson to $M^{\scr \xi}$. It is then not difficult to see that the criterion consists of determining the convergence or divergence of the more tractable series (compare to (\ref{erick1})) given, for any  $\ell_{1},\ell_{2}$ in $\{\ttu,\ttd\}$, by
\begin{equation} \label{jgen0}
J_{\ell_{1}\mid\ell_{2}}:=\sum_{n=1}^{\infty}  \frac{n\mathbb P(\tau^{\ell_{1}}=n)}{\sum_{k=1}^{n} \mathbb P(\tau^{\ell_{2}}\geq k)}=\sum_{n=1}^{\infty}  \frac{n(-\Delta \tail_{\ell_{1}}(n))}{\sum_{k=1}^{n} \tail_{\ell_{2}}(k)},
\end{equation}
where $\Delta V(n)$ denotes the forward discrete derivative at point $n$ of the real sequence $(V_{n})$, {\it i.e.}
\begin{equation}
\Delta V(n)=V(n+1)-V(n).
\end{equation}
As stated in the theorem \ref{undefinedrt} below, the criterion can be rewritten with the series defined  by
\begin{equation} \label{kgen}
K_{\ell_{1}\mid\ell_{2}}:=\sum_{n=1}^{\infty}\left (1-\frac{n \tail_{\ell_2}(n)}{\sum_{k=1}^n \tail_{\ell_2}(k)} \right ) \frac{\mathcal T_{\ell_{1}}(n)}{\sum_{k=1}^{n} \mathcal T_{\ell_{2}}(k)}.
\end{equation}
Compare to $J_{\ell_{1}\mid\ell_{2}}$, the quantities $K_{\ell_{1}\mid\ell_{2}}$ have the advantage to involve only the distribution tails and not their derivatives, \emph{i.e.} their densities. The distribution tails are obviously more tractable in computations because of their monotonicity. It turns out the quantity
\begin{equation} \label{error-term}
1-\frac{n \tail_\ell(n)}{\sum_{k=1}^n \tail_\ell(k)}
\end{equation}
may be arbitrarily small, for instance when the distribution corresponding to $\tail_\ell$ is slowly varying. We refer to \cite{BGT} for further considerations about regularly and slowly varying functions. However, as soon as this quantity is well-controlled, typically when it stays away from $0$,   the criterion can be rewritten in terms of
\begin{equation}
\widetilde K_{\ell_1\mid\ell_2}:=\sum_{n=1}^\infty \frac{\tail_{\ell_1}(n)}{\sum_{k=1}^n \tail_{\ell_2}(k)}.
\end{equation}
Finally, it is worth noting that the quantity in \eqref{error-term} remains non-negative as shown in the proof of Theorem \ref{undefinedrt} below. We want also to point out that the {S}tolz-{C}esar\'o lemma \cite{StolCes,Stoltz2011}, a discrete L'Hôpital's rule, can be convenient to study the asymptotic of the latter quantities.

\begin{theo}[recurrence criterium and \emph{a.s.\@} asymptotics]\label{undefinedrt}
\label{cas-sans-drift}
The persistent random walk $S$ is recurrent if and only if
\begin{equation}
\label{J-infini}
J_{\ttu\mid\ttd}=\infty\quad \mbox{and}\quad J_{\ttd\mid\ttu}=\infty,
\end{equation}
and transient to $\infty$ (resp. transient to $-\infty$) if and only if
\begin{equation}
\label{un-J-fini}
J_{\ttu\mid\ttd}=\infty\quad\mbox{and}\quad J_{\ttd\mid\ttu}<\infty\quad(\mbox{resp. }\quad J_{\ttu\mid \ttd}<\infty\quad\mbox{and}\quad J_{\ttd\mid\ttu}=\infty).
\end{equation}
Moreover, when $J_{\ttu\mid\ttd}=\infty$ (resp. $J_{\ttd\mid\ttu}=\infty$),
\begin{equation}\label{limsup}
\limsup_{t\to\infty}\frac{S_{n}}{n}=1 \quad \quad \left ( \mbox{resp.}\quad \liminf_{t\to\infty}\frac{S_{n}}{n}=-1 \right ) \quad a.s..
\end{equation}
Alternatively, the quantities $J_{\ttu|\ttd}$ and $J_{\ttd|\ttu}$ can be substituted with ${K}_{\ttu|\ttd}$ and $K_{\ttd|\ttu}$ respectively.
\end{theo}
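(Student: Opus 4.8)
The plan is to determine the type of $S$ by transferring the problem to Erickson's structure theorem for the randomized walk $M^{\scr\xi}$, and then to obtain the \emph{a.s.\@} asymptotics \eqref{limsup} separately by a one-big-jump argument. By Lemma~\ref{eqrec} the recurrence/transience of $S$ is the oscillation/drift of $M$, which by Lemma~\ref{lemme} is the oscillation/drift of $M^{\scr\xi}$. Since we are in the regime $\Theta_\ttu(\infty)=\Theta_\ttd(\infty)=\infty$, both $\mathbb E[(Y_1^{\scr\xi})^+]=p\,\Theta_\ttu(\infty)$ and $\mathbb E[(Y_1^{\scr\xi})^-]=(1-p)\,\Theta_\ttd(\infty)$ are infinite, so Erickson's theorem \cite{Erickson2} applies to $M^{\scr\xi}$ and fixes its type through the two series $J_+,J_-$ of \eqref{erick1} built from the law of $Y_1^{\scr\xi}$.

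The next step is to evaluate these series explicitly. As $Y_1^{\scr\xi}$ equals $+\tau^\ttu$ with probability $p$ and $-\tau^\ttd$ with probability $1-p$, for every $n\ge1$ and $k\ge1$,
\[\mathbb P(Y_1^{\scr\xi}=n)=p\,\mathbb P(\tau^\ttu=n),\qquad \mathbb P(Y_1^{\scr\xi}\ge k)=p\,\tail_\ttu(k),\]
\[\mathbb P(Y_1^{\scr\xi}=-n)=(1-p)\,\mathbb P(\tau^\ttd=n),\qquad \mathbb P(Y_1^{\scr\xi}\le -k)=(1-p)\,\tail_\ttd(k).\]
Plugging these into \eqref{erick1} yields $J_+=\frac{p}{1-p}\,J_{\ttu\mid\ttd}$ and $J_-=\frac{1-p}{p}\,J_{\ttd\mid\ttu}$, so the constants cancel in the dichotomy---which explains why the type is independent of $p$. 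Erickson's trichotomy (oscillation iff $J_+=J_-=\infty$; drift to $+\infty$, resp. $-\infty$, iff $J_+=\infty,J_-<\infty$, resp. $J_-=\infty,J_+<\infty$; the case $J_+,J_-<\infty$ being ruled out by the infinite means) then reads off, through the two lemmas, as \eqref{J-infini}--\eqref{un-J-fini}.

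It remains to prove \eqref{limsup}. Here $S_n/n\le1$ holds trivially, and evaluating $S$ at the up-down breaking times gives $S_{T_n}/T_n=M_n/T_n=(U_n-D_n)/(U_n+D_n)$ with $U_n:=\sum_{k\le n}\tau_k^\ttu$ and $D_n:=\sum_{k\le n}\tau_k^\ttd$; hence $\limsup_n S_n/n\ge\limsup_n M_n/T_n$, and it is enough to show $\limsup_n U_n/D_n=\infty$ when $J_{\ttu\mid\ttd}=\infty$. Since $U_n\ge\tau_n^\ttu$, this follows from $\limsup_n \tau_n^\ttu/D_n=\infty$. Conditioning on the whole descent sequence together with $\tau_1^\ttu,\dots,\tau_{n-1}^\ttu$---a filtration with respect to which $D_n$ is measurable while $\tau_n^\ttu$ is independent---the conditional Borel--Cantelli lemma turns $\{\tau_n^\ttu>M D_n\ \text{i.o.}\}$ into the \emph{a.s.\@} divergence, for each $M$, of $\sum_n \tail_\ttu(M D_n)$.

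I expect this divergence to be the crux of the whole proof. Taking expectations replaces it by $\sum_m \mathbb P(\tau^\ttu=m)\,U(m/M)$, where $U(x)=\sum_n \mathbb P(D_n\le x)$ is the renewal function of the descent walk; the infinite-mean renewal estimate $U(x)\asymp x/\Theta_\ttd(x)$, with $\Theta_\ttd(x)=\sum_{k\le x}\tail_\ttd(k)$, identifies this series up to constants with $J_{\ttu\mid\ttd}$, after which divergence in expectation is upgraded to \emph{a.s.\@} divergence using the monotonicity of $(D_n)$. Finally, the substitution of $K_{\ell_1\mid\ell_2}$ for $J_{\ell_1\mid\ell_2}$ is a summation by parts: with $s(n)=\sum_{k\le n}\tail_{\ell_2}(k)$ and $\mathbb P(\tau^{\ell_1}=n)=-\Delta\tail_{\ell_1}(n)$, shifting the difference in \eqref{jgen0} onto $n/s(n)$ and using $\frac{n}{s(n)}-\frac{n-1}{s(n-1)}=\frac{s(n)-n\tail_{\ell_2}(n)}{s(n)\,s(n-1)}$ produces exactly the summand of $K_{\ell_1\mid\ell_2}$ up to the factor $s(n-1)/s(n)\to1$; the bracket \eqref{error-term} is non-negative because $\tail_{\ell_2}$ is non-increasing (so $n\tail_{\ell_2}(n)\le s(n)$), whence both series carry non-negative terms and share their convergence behaviour once the boundary term $\frac{n}{s(n)}\tail_{\ell_1}(n+1)$ is checked to be harmless.
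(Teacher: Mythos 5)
Your treatment of the type classification coincides with the paper's: Lemma~\ref{eqrec} and Lemma~\ref{lemme} reduce everything to the randomized walk $M^{\scr\xi}$, whose Erickson series \eqref{erick1} equal $\tfrac{p}{1-p}J_{\ttu\mid\ttd}$ and $\tfrac{1-p}{p}J_{\ttd\mid\ttu}$, so the constants cancel; likewise your Abel-summation argument for substituting $K_{\ell_1\mid\ell_2}$ for $J_{\ell_1\mid\ell_2}$ is the paper's own, up to a harmless index shift in the discrete derivative. Where you genuinely diverge from the paper is \eqref{limsup}: the paper reduces it to $\mathbb P\left(\tau_n^\ttu \geq c\sum_{k=1}^{n}\tau_k^\ttd\ i.o.\right)=1$ and obtains this from Kesten's theorem \cite[Theorem 5., p. 1190]{Kesten} applied to $M^{\scr\xi}$, followed by a derandomization step (the distributional identity between $\sum_{k\le n}(1-\xi_k)\tau_k^\ttd$ and $\sum_{k\le N_n}\tau_k^\ttd$, the SLLN for $N_n$, and zero--one laws). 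You instead attempt a self-contained renewal-theoretic proof: conditional Borel--Cantelli reduces the problem to the a.s.\@ divergence of $\sum_n \tail_\ttu(M D_n)$, whose expectation you correctly identify, up to constants, with $J_{\ttu\mid\ttd}$ via the infinite-mean renewal bound $U(x)\asymp x/\Theta_\ttd(x)$. That route is viable and more elementary in spirit, since it avoids citing Kesten's theorem.

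However, your final step --- ``divergence in expectation is upgraded to a.s.\@ divergence using the monotonicity of $(D_n)$'' --- is exactly the crux (as you note yourself) and is not a proof. For a non-increasing sequence of non-negative random variables, divergence of the expected sum does \emph{not} imply a.s.\@ divergence: take $b_n=\mathds 1_{\{n\le W\}}$ with $W<\infty$ a.s.\@ and $\mathbb E[W]=\infty$; then $\sum_n \mathbb E[b_n]=\infty$ while $\sum_n b_n=W<\infty$ a.s.\@. So monotonicity alone cannot carry this step; the independence structure of the renewal process $(D_n)$ must be used. A correct completion exists: for $m<n$, writing $D_n=D_m+D'_{n-m}$ with $D'$ an independent copy, the monotonicity of $\tail_\ttu$ gives
\begin{equation*}
\mathbb E\left[\tail_\ttu(MD_m)\,\tail_\ttu(MD_n)\right] \le \mathbb E\left[\tail_\ttu(MD_m)\right]\mathbb E\left[\tail_\ttu(MD_{n-m})\right],
\end{equation*}
hence $\mathbb E[Z_N^2]\le \mathbb E[Z_N]+2\left(\mathbb E[Z_N]\right)^2$ for $Z_N=\sum_{n\le N}\tail_\ttu(MD_n)$; the Paley--Zygmund inequality then yields $\mathbb P\left(\sum_n \tail_\ttu(MD_n)=\infty\right)>0$, and the Hewitt--Savage zero--one law (the divergence event is invariant under finite permutations of the descent increments) upgrades this to probability one, after which Lévy's conditional Borel--Cantelli lemma concludes. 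Without an argument of this kind --- which is essentially re-proving the Kesten/Erickson-type result that the paper simply invokes --- your proof of \eqref{limsup} is incomplete at its central point.
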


Note that the case in which $J_{\ttu\mid\ttd}$ and $J_{\ttd\mid\ttu}$ are both finite does not appear in the theorem. In fact, it follows from \cite{Erickson2} that, in such a case, the drift of the persistent random walk $S$ is well-defined and belongs to $(-1,1)$ which is excluded from this section. This theorem ends the characterization of the type of persistent random walks. In Table \ref{tableau} the conditions for the recurrence and the transience are summarized and we give some applications of these criteria below.

\begin{table}[!ht]
\renewcommand\arraystretch{1.2}
\setlength{\tabcolsep}{.2cm}
\begin{center}
\begin{tabular}{|c|c|c|c|c|}
\hline
 & \multicolumn{2}{c|}{$\Theta_{\ttu}(\infty) < \infty$} & \multicolumn{2}{c|}{$\Theta_{\ttu}(\infty) = \infty$} \\
\hline
\multirow{4}*{$\Theta_\ttd(\infty) < \infty$} & & Transient to $+\infty$ & \multicolumn{2}{c|}{\multirow{2}*{Transient to $+\infty$}} \\
& Recurrent & $\mathbf{d}_{\scr S} \in (0,1)$ & \multicolumn{2}{c|}{\multirow{4}*{$\mathbf{d}_{\scr S}=1$}} \\
\cline{3-3}
& $\mathbf{d}_{\scr S}=0$ & Transient to $-\infty$ & \multicolumn{2}{c|}{}\\
& & $\mathbf{d}_{\scr S} \in (-1,0)$ & \multicolumn{2}{c|}{}\\

\hline
\multirow{4}*{$\Theta_\ttd(\infty) = \infty$} & \multicolumn{2}{c|}{\multirow{2}*{Transient to $-\infty$}} & & Transient to $+\infty$ \\
& \multicolumn{2}{c|}{\multirow{4}*{$\mathbf{d}_{\scr S}=-1$}} & Recurrent & $\infty = J_{\ttu\mid\ttd} > J_{\ttd\mid\ttu}$ \\
\cline{5-5}
& \multicolumn{2}{c|}{} & $J_{\ttu\mid\ttd}=J_{\ttu\mid\ttd}=\infty$ & Transient to $-\infty$ \\
& \multicolumn{2}{c|}{} & & $\infty = J_{\ttd\mid\ttu} > J_{\ttu\mid\ttd}$\\
\hline
\end{tabular}
\end{center}
\caption{\label{tableau}Recurrence and transience criteria.}
\end{table}

\subsubsection*{Example of harmonic transitions}

{\it
Consider sequences of transitions $(\alpha_{n}^{\ttu})$ and $(\alpha_{n}^{\ttd})$ in $[0,1)$ with $\lambda_\ttu$ and $\lambda_{\ttd}$ in $(0,1)$ such that for sufficiently large $n$,
\begin{equation}\label{harmonic}
\alpha_{n}^{\ttu}=\frac{\lambda_{\ttu}}{n}\quad\mbox{and}\quad  \alpha_{n}^{\ttu}=\frac{\lambda_{\ttu}}{n}.
\end{equation}
Then, using the asymptotics of tails given in (\ref{tailasymp}), the corresponding persistent random walk is recurrent if and only if $\lambda_{\ttu}=\lambda_{\ttd}$.
}

We need to stress that, for this toy example, the result does not hold if the equalities in (\ref{harmonic}) are replaced by asymptotic equivalences. These conditions are somehow very stiff on the transitions since the distribution tails of the length of runs involve an infinite product. Still, interesting perturbation criteria in various context are given in Section \ref{perturbations}.

\begin{proof}[Proof of Theorem \ref{cas-sans-drift}]
First, the statements (\ref{J-infini}) and (\ref{un-J-fini}) related to the recurrence and transience properties are direct consequences of Erickson's criteria \cite{Erickson2} and of lemma \ref{lemme} since the two-sided distribution tails of the increments of the random walk $M^{\scr \xi}$ given in (\ref{Mxi-rw}) satisfies for all $n\geq 1$,
\begin{equation*}
\mathbb P(Y^{\scr \xi}_{1}\geq n)=\mathbb P(\xi_{1}=1)\mathbb P(\tau^{\ttu}\geq n)\quad\mbox{and}\quad \mathbb P(Y^{\scr \xi}\leq -n)=	\mathbb P(\xi_{1}=0)\mathbb P(\tau^{\ttd}\geq n).
\end{equation*}

Besides, from the equalities
\begin{equation*}
T_{n}=\sum_{k=1}^{n}\tau_{k}^{\ttu}+\sum_{k=1}^{n}\tau_{k}^{\ttd}\quad\mbox{and}\quad
S_{T_{n}}=\sum_{k=1}^{n}\tau_{k}^{\ttu}-\sum_{k=1}^{n}\tau_{k}^{\ttd}
\end{equation*}
we can see that (\ref{limsup}) is satisfied if for all $c>0$,
\begin{equation}\label{bigjump}
\PP \left ( \tau_n^\ttu \geq c \sum_{k=1}^{n} \tau_k^\ttd \quad i.o.\right ) =1.
\end{equation}
In fact, using the Kolmogorov's zero–one law, we only need to prove that this probability is not zero. To this end, we can see that \cite[Theorem 5., p. 1190]{Kesten} applies  and it follows that
\begin{equation}\label{kesten}
\limsup_{n\to\infty}\frac{(Y_{n}^{\scr \xi})^{+}}{\sum_{k=1}^{n} (Y_{n}^{\scr \xi})^{-}}=\limsup_{n\to\infty}\frac{\xi_{n}\tau^{\ttu}_{n}}{\sum_{k=1}^{n}(1-\xi_{k})\tau_{k}^{\ttd}}=\infty\quad a.s..
\end{equation}
Roughly speaking, this theorem states that the position of a one-dimensional random walk with an undefined mean is essentially given by the last big jump. Introducing the counting process given for all $n\geq 1$ by
\begin{equation}\label{count}
N_{n}:=\#\{1\leq k\leq n : \xi_{k}= 0\},
\end{equation}
we shall prove that
\begin{equation}\label{randomwalk}
\left\{\sum_{k=1}^{n}(1-\xi_{k})\tau_{k}^{\ttd}\right\}_{n\geq 1}\overset{\mathcal L}{=}
\left\{\sum_{k=1}^{N_{n}}\tau_{k}^{\ttd}\right\}_{n\geq 1}.
\end{equation}
For this purpose, we will see that the sequences of increments consists of independent random variables and are equal in distribution in the following sense
\begin{equation}\label{randomwalkincr}
\left\{(1-\xi_{n})\tau^{\ttd}_{n}\right\}_{n\geq 1}\overset{\mathcal L}{=} \left\{(1-\xi_{n})\tau^{\ttd}_{N_{n}}\right\}_{n\geq 1}.
\end{equation}
First note that for any $n\geq 1$,
\begin{equation}\label{loi}
\mathbb P((1-\xi_{n})\tau_{N_{n}}^{\ttd}=0)=\mathbb P(\xi_{1}=1)=\mathbb P((1-\xi_{n})\tau_{n}^{\ttd}=0).
\end{equation}
Moreover, up to a null set, we have $\{\xi_{n}=0\}=\{N_{n}=N_{n-1}+1\}$ and $N_{n-1}$ is independent of $\xi_{n}$ and of the lengths of runs. We deduce that for any $k\geq 1$,
\begin{equation}\label{loi2}
\mathbb P((1-\xi_{n})\tau_{N_{n}}^{\ttd}=k)=\mathbb P(\xi_{1}=0,\tau_{1}^{\ttd}=k)=\mathbb P((1-\xi_{n})\tau_{n}^{\ttd}=k).
\end{equation}
Hence the increments of the random walks in (\ref{randomwalk}) are identically distributed. Since the increments on left-hand side in (\ref{randomwalkincr}) are independent, it only remains to prove the independence of those on the right-hand side in \eqref{randomwalkincr} to obtain the equality in distribution in \eqref{randomwalk}. Let us fix $n\geq 1$ and set for any non-negative integers $k_{1},\cdots,k_{n}\geq 0$,
\begin{equation*}
I_{n}:=\{1\leq j\leq n : k_{j}\neq 0\}	\quad\mbox{and}\quad m_{n}:=\mathsf{card}(I_{n}).
\end{equation*}
Remark that $\ell\longmapsto m_{\ell}$ is increasing on $I_{n}$ and up to a null set,\begin{equation*}
\bigcap_{\ell\notin I_{n}}\{\xi_{\ell}=1\}\cap\bigcap_{\ell\in I_{n}}\{\xi_{\ell}=0\}\subset \{N_{n}=m_{n}\}.
\end{equation*}
Then using (\ref{loi}) and (\ref{loi2}) together with the independence properties we can see that
\begin{equation*}
\mathbb P\left(\bigcap_{j=1}^{n} \{(1-\xi_{j})\tau_{N_{j}}^{\ttd}=k_{j}\}\right)=
\mathbb P\left(\bigcap_{\ell\notin I_{n}}\{\xi_{\ell}=1\}\cap\bigcap_{\ell\in I_{n}}\{\xi_{\ell}=0,\tau_{m_{\ell}}^{\ttd}=k_{\ell}\}\right)=
\prod_{j=1}^{n} \mathbb P((1-\xi_{j})\tau_{N_{j}}^{\ttd}=k_{j}),
\end{equation*}
which ends the proof of (\ref{randomwalk}).

Next, by the standard LLN for \emph{i.i.d.}\@ sequences, we obtain that for any integer $q$ greater than $1/p$, with probability one, the events $\{N_{n}\geq \lfloor n/q\rfloor\}$ hold for all sufficiently large $n$. We deduce by (\ref{randomwalk}) and (\ref{kesten}) that
\begin{equation*}\label{bigjump2}
\PP \left(\tau_n^\ttu \geq c \sum_{k=1}^{\left\lfloor n/q\right\rfloor} \tau_k^\ttd \quad i.o. \right)=1.
\end{equation*}
As a consequence,
\begin{equation}\label{bigjump3}
\mathbb P\left(\bigcup_{\ell=0}^{q-1}
\left\{\tau_{qn+\ell}^{\ttu}\geq c \sum_{k=1}^{n} \tau_k^\ttd\right\}\quad i.o.\right)=1.
\end{equation}
Again, applying the Kolmogorov's zero-one law, we get that the $q$ sequences of events (having the same distribution) in the latter equation occur infinitely often with probability one. We deduce that (\ref{bigjump}) is satisfied  and this achieves the proof of (\ref{limsup}).

For the alternative form of the theorem, it remains to prove that $J_{\ttu|\ttd}=\infty$ if and only if $K_{\ttu\mid\ttd}=\infty$.
Summing by parts (the so called \emph{Abel transformation}) we can write for any $r\geq 1$,
\begin{equation}
\sum_{n=1}^r \frac{n (-\Delta \tail_{\ttu}(n))}{\sum_{k=1}^n \tail_\ttd(k)} = \left[1-\frac{(r+1)\tail_{\ttu}(r+1)}{\sum_{k=1}^{r+1} \tail_\ttd(k)}\right]+\sum_{n=1}^{r} \Delta\left( \frac{n}{\sum_{k=1}^n \tail_\ttd(k)}\right)\tail_{\ttu}(n+1).
\end{equation}
Besides, a simple computation gives
\begin{equation}\label{positif}
\Delta \left (\frac{n}{\sum_{k=1}^n \tail_\ttd(k)} \right ) = \frac{\sum_{k=1}^{n} \tail_\ttd(k)-n \tail_\ttd(n+1)}{\sum_{k=1}^{n+1}\tail_\ttd(k)
\sum_{k=1}^{n\phantom{+1}}\tail_\ttd(k)}
=\frac{\mathbb E[\tau^{\ttd}\mathds 1_{\tau^{\ttd}\leq n}]}{\sum_{k=1}^{n+1}\tail_\ttd(k)
\sum_{k=1}^{n\phantom{+1}}\tail_\ttd(k)}\geq 0.
\end{equation}
It follows that
\begin{equation}\label{eqJK1}
\sum_{n=1}^r \frac{n (-\Delta \tail_{\ttu}(n))}{\sum_{k=1}^n \tail_\ttd(k)} = \left[1-\frac{(r+1)\tail_{\ttu}(r+1)}{\sum_{k=1}^{r+1} \tail_\ttd(k)}\right]+\sum_{n=1}^{r} \left(1-\frac{n\tail_{\ttd}(n+1)}{\sum_{k=1}^{n}\tail_\ttd(k)}\right)\frac{\tail_{\ttu}(n+1)}{\sum_{k=1}^{n+1}\tail_\ttd(k)}.
\end{equation}
Due to the non-negativeness of the forward discrete time derivative (\ref{positif}) and due to our assumption for $\Theta_{\ttu}(\infty)$ and $\Theta_{\ttd}(\infty)$ to be infinite, the general term of the series in the right-hand side of the latter equation is non-negative and (up to a shift) equivalent to that of $K_{\ttu|\ttd}$. Moreover, we get again from (\ref{positif}) that
\begin{equation}\label{eqJK2}
\frac{r \tail_\ttu(r)}{\sum_{k=1}^r \tail_\ttd(k)}= \sum_{m=r+1}^\infty \frac{r(-\Delta \tail_\ttu(m))}{\sum_{k=1}^r \tail_\ttd(k)} \leq \sum_{m=r+1}^\infty \frac{m (-\Delta \tail_\ttu(m))}{\sum_{k=1}^m \tail_\ttd(k)}.
\end{equation}
Thus, if $J_{\ttu|\ttd}$ is infinite then so is $K_{\ttu\mid\ttd}$. Conversely, the finiteness of $J_{\ttu\mid\ttd}$ together with the estimate \eqref{eqJK2} implies the first term on the right-hand side in \eqref{eqJK1} remains bounded achieving the proof.
\end{proof}

\begin{proof}[Proof Lemma \ref{lemme}]
Deeply exploiting the Theorem \ref{RW} stating that any non-constant random walk is, with probability one, either (trichotomy) oscillating or drifting to $\pm\infty$, the proof is organized as follows:
\begin{enumerate}
\item[1)] At first, we shall prove the result in the symmetric case $p=1/2$.
\item[2)] Secondly, we shall deduce the statement for any arbitrary $p\in(0,1)$ from the latter particular case.
\end{enumerate}

 To this end, assume that the supremum limit of $M^{\scr \xi}$ is {\it a.s.\@} infinite. Following exactly the same lines as in the proof of \eqref{bigjump}, we obtain that $M$ is non-negative infinitely often with probability one. Applying  Theorem \ref{RW} we deduce that the supremum limit of $M$ is also {\it a.s.\@} infinite. Thereafter, again from the latter theorem and by symmetry, we only need to prove that if $M^{\scr \xi}$ is drifting, then so is $M$. When the {\it i.i.d.\@} Bernoulli random variables $(\xi_{n})$ are symmetric, that is $p=1/2$, it is a simple consequence of the equalities
\begin{equation}
M^{\scr \xi}
\overset{\mathcal L}{=}
M^{\scr 1-\xi}\quad\mbox{and}\quad M=M^{\scr
\xi}+M^{\scr 1-\xi}.
\end{equation}
At this stage it is worth noting that the lemma is proved in the symmetric situation.

When $p\neq 1/2$, we shall  prove that if the supremum limit of $M$ is {\it a.s.\@} infinite, then so is  the supremum limit of $M^{\scr \xi}$. Since the converse has already been proved at the beginning, we shall deduce our lemma. It is not difficult to see that if the supremum limit of $M$ is {\it a.s.\@} infinite, then so is for the supremum limit of the subordinated random walk $Q=\{M_{qn}\}_{n\geq 0}$ for any $q \geq 1$ (see \cite{Erickson} for instance). Moreover, given an {\it i.i.d.\@} sequence of random variables $(\epsilon_{n})$ independent of the length of runs and distributed as symmetric Bernoulli distributions, we deduce from the first point that $Q$ and $Q^{\epsilon}$ are of the same type. Therefore, the supremum limit of $Q^{\epsilon}$ is also {\it a.s.\@} infinite and mimicking the proof of (\ref{bigjump}) we get that for all $c>0$,
\begin{equation*}\label{bigjump6}
1=\mathbb P\left(\sum_{\ell=1}^{q}\tau_{q(n-1)+\ell}^{\ttu}\geq c\sum_{k=1}^{qn}\tau_{k}^{\ttd}\quad i.o.\right)\leq \mathbb P\left(\bigcup_{\ell=1}^{q}\left\{\tau_{q(n-1)+\ell}^{\ttu})\geq \frac{c}{q}\sum_{k=1}^{qn}\tau_{k}^{\ttd}\right\}\quad i.o.\right).
\end{equation*}
Since the $q$ sequences of events in the right-hand side of the latter inequality are identically distributed and independent, we get from the zero-one law that, for any integers $r,q\geq 1$, and all $c>0$,
\begin{equation}\label{bigjump7}
\mathbb P\left(\tau_{rn}^{\ttu}\geq c\sum_{k=1}^{qn}\tau_{k}^{\ttd}\quad i.o.\right)=1.
\end{equation}
Let $N$ be the renewal process associated with $(\xi_{n})$ and given in (\ref{count}). Similarly to (\ref{randomwalk}) we show that
\begin{equation}\label{randomwalk2}
\{M_{n}^{\scr \xi}\}_{n\geq 1}=\left\{\sum_{k=1}^{n-N_{n}}\tau_{k}^{\ttu}-\sum_{k=1}^{N_{n}}\tau_{k}^{\ttd}\right\}_{n\geq 1}.
\end{equation}
The key point is to write, for all $n\geq 1$ and non-zero $k_{1},\cdots, k_{n}$ in $\mathbb Z$,
\begin{equation*}
\mathbb P\left(\bigcap_{j=1}^{n} \{\xi_{j}\tau_{j-N_{j}}^{\ttu}-(1-\xi_{j})\tau_{N_{j}}^{\ttd}=k_{j}\}\right)=
\mathbb P\left({}
\bigcap_{\ell\in I_{n}^{\ttu}}\{\xi_{\ell}=1,\tau_{m_{\ell}^{\ttu}}^{\ttu}=k_{\ell}\}\cap
\bigcap_{\ell\in I_{n}^{\ttd}}\{\xi_{\ell}=0,\tau_{m_{\ell}^{\ttd}}^{\ttd}=k_{\ell}\} \right),
\end{equation*}
where $m_{n}^{\ell}$, with $\ell\in\{\ttu,\ttd\}$ and $n\geq 1$, denote respectively the cardinal of the sets
\begin{equation*}
I_{n}^{\ttu}:=\{1\leq j\leq n : k_{j}> 0\}
\quad\mbox{and}\quad
I_{n}^{\ttd}:=\{1\leq j\leq n :  k_{j}< 0\}.
\end{equation*}
Thereafter, by applying the strong law of large numbers to the counting process $N$, we get that for any integer $c$ greater than $1/p$, choose $c\geq 2$, the events $\{N_{n}\geq \lfloor n/c\rfloor\}$ occur for all large enough $n$ with probability one. We deduce from (\ref{randomwalk2}) and (\ref{bigjump7}) and  that
\begin{equation*}
\limsup_{n\to\infty} M_{n}^{\scr\xi}\geq \limsup_{n\to\infty} M_{cn}^{\scr\xi}\geq \limsup_{n\to\infty}\sum_{k=1}^{(c-1)n}\tau_{k}^{\ttu}-\sum_{k=1}^{n}\tau_{k}^{\ttd}=\infty\quad a.s..
\end{equation*}
This achieves the proof  the lemma.
\end{proof}

\begin{rem} \label{asymp-rem}
Contrary to the well-defined drift case for which a small perturbation
on the parameters of a recurrent persistent random walk leads in
general to a transient behaviour, in the case of an undefined drift the
persistent random walk may stay recurrent as long as the perturbation
remains asymptotically controlled. To put it in a nutshell, the criterion is global  in the former case and asymptotic in the latter case.
\end{rem}

In respect to this  remark, in the next section, we give some examples of perturbations exhibiting stability or instability of the recurrent and transient properties in the context of Section \ref{und-drift}, that is the undefined drift situation.

\section{Perturbations results}
\label{perturbations}

\setcounter{equation}{0}

In this part, we still assume that $\Theta_\ttu(\infty)=\Theta_\ttd(\infty)=\infty$. In this context, needing weaker conclusions for our purpose, assumptions of the comparison Lemma \ref{comp} can be suitably relaxed in order to fit with the subsequent applications which can be roughly split into two cases.

The first one corresponds to the case of bounded perturbations in the sense of the condition \eqref{bounded-per} below. In particular, under slight assumptions, it is shown that, for randomly chosen probabilities of change of directions, the recurrence or transience depends essentially on their deterministic means.

In the case of unbounded perturbations, the analysis is more subtle. In fact, we distinguish two regimes depending on whether the rough rate of the $\alpha_n^{\ell}$'s is close to the right-hand side of \eqref{ex2} (termed in the sequel the upper boundary) or to the left-hand side of \eqref{ex2moins} (the lower boundary). Here, the notion of closeness has to be understood in the sense of a large $p$ in \eqref{ex2moins} and \eqref{ex2}. In short, the perturbations needs to be thin (and actually, thinner and thinner as $p$ is increasing) in the former case, whereas they can be chosen relatively thicker in the latter one. Finally, we provide example of (possibly random) lacunar unbounded perturbations for which the persistent random walk remains recurrent with $\alpha_n^{\ell}$'s probabilities with a rate close to the lower boundary.


\subsection{Asymptotic comparison lemma and application}

Lemma \ref{asymp-comp} below is somehow an improvement of the comparison Lemma \ref{comp}. Actually, it means in the context of indefinite means for the lengths of runs that it suffices to compare the tails in \eqref{comp1} asymptotically to obtain a comparison of the infimum and supremum limits of the corresponding persistent random walks. Pointing out though the coupling inequality in \eqref{comp2} no longer holds.

\begin{lem}[asymptotic comparison] \label{asymp-comp}
Let $S$ and $\widetilde S$ be two persistent random walks whose corresponding distribution tails of the lengths of runs satisfy, for $n$ large enough,
\begin{equation}\label{queues-asymptotiques}
\tail_\ttu(n) \leq \widetilde{\tail}_\ttu(n)\quad \textrm{and} \quad \tail_\ttd(n) \geq \widetilde{\tail}_\ttd(n).
\end{equation}
Then, denoting by $K_{\ttu,\ttd}$ and $\widetilde{K}_{\ttu,\ttd}$ the associated quantities defined in \eqref{kgen},
\begin{equation}\label{compJ}
K_{\ttu\mid\ttd} = \infty\;\Longrightarrow \;\widetilde{K}_{\ttu\mid\ttd}=\infty.
\end{equation}
Equivalently (see in particular (\ref{limsup})  in Theorem \ref{undefinedrt}),
\begin{equation}\label{equicomp}
\limsup_{n\to\infty} S_{n}=\infty
\;\Longrightarrow \;
\limsup_{n\to \infty}\widetilde S_{n}=\infty
\quad\mbox{or}\quad
\limsup_{n\to\infty} \frac{S_{n}}{n}=1\;\Longrightarrow \;
\limsup \frac{\widetilde S_{n}}{n}=1 \quad a.s..
\end{equation}
Again, the quantity $K_{\ttu\mid\ttd}$ can be substituted with $J_{\ttu\mid\ttd}$ given in (\ref{jgen0}) and a similar comparison lemma can be deduced by symmetry exchanging $\ttu$ and $\ttd$.
\end{lem}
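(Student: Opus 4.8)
The plan is to prove the equivalent $J$--version of the implication, namely $J_{\ttu\mid\ttd}=\infty\Rightarrow\widetilde J_{\ttu\mid\ttd}=\infty$, and then to recover (\ref{compJ}) by invoking, for each of the two walks separately, the equivalence $J_{\ttu\mid\ttd}=\infty\Leftrightarrow K_{\ttu\mid\ttd}=\infty$ already established inside the proof of Theorem \ref{undefinedrt}; the chain (\ref{equicomp}) is then only a restatement through Theorem \ref{undefinedrt}. First I would check that the hypothesis (\ref{queues-asymptotiques}) places $\widetilde S$ in the same regime: summing $\widetilde{\tail}_\ttu(n)\geq\tail_\ttu(n)$ over $n$ gives $\widetilde\Theta_\ttu(\infty)=\infty$, while if $\widetilde\Theta_\ttd(\infty)<\infty$ then $\widetilde S$ has drift $+1$ and is transient to $+\infty$ by Proposition \ref{crit-rec}, so the conclusion is immediate. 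Hence I may assume $\widetilde S$ also has an undefined drift, so that the $J$--$K$ equivalence applies to it as well.

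Writing $\Sigma_n:=\sum_{k=1}^n\tail_\ttd(k)$ and $\widetilde\Sigma_n:=\sum_{k=1}^n\widetilde{\tail}_\ttd(k)$, the second inequality in (\ref{queues-asymptotiques}) yields $\widetilde\Sigma_n\leq\Sigma_n+C$ for a constant $C$ absorbing the finitely many initial indices; since $\Sigma_n\to\infty$, this gives $\widetilde\Sigma_n^{-1}\geq c\,\Sigma_n^{-1}$ for $n$ large, whence $\widetilde J_{\ttu\mid\ttd}\geq c\,J^{\ast}$ with $J^{\ast}:=\sum_n n(-\Delta\widetilde{\tail}_\ttu(n))/\Sigma_n$. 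Thus it suffices to prove $J^{\ast}=\infty$, a statement in which the two walks now share the same denominator. Setting $\phi_n:=n/\Sigma_n$, which is non-decreasing by the computation (\ref{positif}), I would apply the Abel transformation exactly as in (\ref{eqJK1}): for either tail $g\in\{\tail_\ttu,\widetilde{\tail}_\ttu\}$,
\[
\sum_{n=1}^r\phi_n\bigl(-\Delta g(n)\bigr)=\phi_1 g(1)-\phi_r g(r+1)+\sum_{n=2}^r\Delta\phi(n-1)\,g(n).
\]
Applied to $g=\tail_\ttu$, together with $\phi_r\tail_\ttu(r+1)\geq0$ and the hypothesis $J_{\ttu\mid\ttd}=\infty$, this shows that the main sum $\sum_n\Delta\phi(n-1)\tail_\ttu(n)$ diverges; since $\Delta\phi\geq0$ and $\widetilde{\tail}_\ttu(n)\geq\tail_\ttu(n)$ for large $n$, the corresponding main sum with $\widetilde{\tail}_\ttu$ also diverges, up to an additive constant coming from the finitely many exceptional indices.

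It remains to transfer this back to $J^{\ast}$ through the reverse Abel identity, where the boundary term $-\phi_r\widetilde{\tail}_\ttu(r+1)$ now carries the wrong sign; this is the main obstacle, and the reason a naive term-by-term domination fails is precisely that (\ref{queues-asymptotiques}) controls the tails but not their increments $-\Delta\tail_\ttu$. I would argue by contradiction: if $J^{\ast}<\infty$, then the Erickson-type estimate (\ref{eqJK2}), run with $\widetilde{\tail}_\ttu$ in place of $\tail_\ttu$ but keeping the denominator $\Sigma$, bounds $\phi_r\widetilde{\tail}_\ttu(r)$ by the tail $\sum_{m\geq r}\phi_m(-\Delta\widetilde{\tail}_\ttu(m))$ of a convergent series, forcing $\phi_r\widetilde{\tail}_\ttu(r+1)\to0$. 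Feeding this into the Abel identity for $g=\widetilde{\tail}_\ttu$ would then make the main sum $\sum_n\Delta\phi(n-1)\widetilde{\tail}_\ttu(n)$ converge, contradicting the previous paragraph. Hence $J^{\ast}=\infty$, so $\widetilde J_{\ttu\mid\ttd}=\infty$, and the $K$--version follows from the $J$--$K$ equivalence for $\widetilde S$. The delicate point throughout is this control of the boundary term, where the monotonicity $\Delta\phi\geq0$ supplied by (\ref{positif}), combined with (\ref{eqJK2}), is exactly what rescues the argument.
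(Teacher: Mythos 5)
Your proof is correct, but it takes a genuinely different route from the paper's. The paper argues softly: it truncates both pairs of tails (setting them equal to $1$ for $n<N$) so that the comparison hypothesis holds for \emph{every} $n$, invokes the coupling of Lemma \ref{comp} to get $S^{\mathtt c}\leq \widetilde S^{\mathtt c}$ almost surely, applies Theorem \ref{undefinedrt} to the truncated walks, and finally checks that the general term of the $K$-series is asymptotically unchanged when finitely many tail values are modified, so that $K^{\mathtt c}_{\ttu\mid\ttd}=\infty$ if and only if $K_{\ttu\mid\ttd}=\infty$ (and likewise for the tilde quantities). You instead stay entirely at the level of the series: after reducing to the common denominator $\Sigma_n=\sum_{k=1}^n\tail_\ttd(k)$, you run the Abel summation of \eqref{eqJK1} in both directions, exploit the monotonicity of $\phi_n=n/\Sigma_n$ coming from \eqref{positif} to dominate the main sums term by term (which is exactly where the hypothesis \eqref{queues-asymptotiques} enters, since it controls tails and not increments), and you neutralize the ill-signed boundary term $-\phi_r\widetilde{\tail}_\ttu(r+1)$ by a contradiction argument based on \eqref{eqJK2} --- the same mechanism the paper uses to prove $J_{\ttu\mid\ttd}=\infty\Leftrightarrow K_{\ttu\mid\ttd}=\infty$, but deployed here to compare two different walks. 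What each approach buys: the paper's proof is shorter and recycles probabilistic results already in place (coupling plus the recurrence theorem), whereas yours is purely deterministic, establishing \eqref{compJ} as a statement about distribution tails with no probabilistic input, and it makes transparent why naive term-by-term domination of the $J$-series fails. One small point to tidy: in the case $\widetilde\Theta_\ttd(\infty)<\infty$, your appeal to Proposition \ref{crit-rec} yields the probabilistic conclusion \eqref{equicomp} but not literally $\widetilde K_{\ttu\mid\ttd}=\infty$; either note that this case is excluded by the standing assumption of Section \ref{perturbations} (the paper's own proof implicitly takes both walks with undefined drift), or observe directly that then $n\widetilde{\tail}_\ttd(n)\big/\sum_{k=1}^n\widetilde{\tail}_\ttd(k)\to 0$ while $\sum_n\widetilde{\tail}_\ttu(n)=\infty$, which forces $\widetilde K_{\ttu\mid\ttd}=\infty$ anyway.
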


We stress that in the proof of this lemma we make use of the quantities $K_{\ell_{1},\ell_{2}}$ and not the $J_{\ell_{1},\ell_{2}}$ ones since the comparisons of the tails (\ref{queues-asymptotiques}) are required only for large $n$. Therefore, the inequalities on the associated densities (\ref{comp3}) do not necessarily hold even asymptotically.

\begin{proof}[Proof of Lemma \ref{asymp-comp}]
Let $N\geq 1$ be such that the inequalities of (\ref{queues-asymptotiques}) are satisfied for all $n\geq N$ and consider the persistent random walks $S^{\mathtt c}$ and ${\widetilde S}^{\mathtt c}$ associated with the modified distribution tails given for any $\ell\in\{\ttu,\ttd\}$ by
\begin{equation*}
\tail_\ell^{\mathtt c}(n)=
\left\{\begin{array}{lll}
\tail_\ell(n), & \textrm{when} &  n \geq N, \\
1, & \textrm{when} & n<N,  \\
\end{array}\right.
\quad\textrm{and}\quad
\widetilde{\tail}_\ell^{\mathtt c}(n)=
\left\{\begin{array}{lll}
\widetilde{\tail}_{\ell}(n) & \textrm{when} &  n\geq N, \\
1 & \textrm{when} & n<N. \\
\end{array} \right .
\end{equation*}
Due to Lemma \ref{comp}, there exists a coupling such that $S^{\mathtt c} \leq \widetilde{S}^{\mathtt c}$ a.s.. It follows from Theorem \ref{cas-sans-drift} that (\ref{compJ}) is satisfied with the quantities $K^{\mathtt c}_{\ttu\mid\ttd}$ and  $\widetilde{K}^{\mathtt c}_{\ttu\mid\ttd}$, corresponding to  the perturbed persistent random walk $S^{\mathtt c}$ and ${\widetilde S}^{\mathtt c}$, in place of those of $S$ and $\widetilde S$. To conclude the proof, it remains to show that $K^{\mathtt c}_{\ttu\mid\ttd}$ is infinite if and only if $K_{\ttu\mid\ttd}$ is infinite, and similarly for the tilde quantities.
Since $\tail_\ell^{\mathtt c}(n)$ and $\tail_\ell(n)$ only differ for finitely many $n$, it comes that
\begin{equation*}
\frac{\tail_{\ttu}^{\mathtt c}(n)}{\left (\sum_{k=1}^n \tail^{\mathtt c}_{\ttd}(k)\right)^2}
\;\underset{n\to \infty}{\sim}\;{}
\frac{\tail_{\ttu}(n)}{\left(\sum_{k=1}^n \tail_{\ttd}(k)\right)^2}.
\end{equation*}
Here we recall that the denominators in the latter equation tends to infinity since the means of the lengths of runs are both supposed infinite. Moreover, for the same reasons, we can see that
\begin{equation*}
1- \frac{n \tail_\ttd^{\mathtt c}(n)}{\sum_{k=1}^n \tail_\ttd^{\mathtt c}(k)}
=
\frac{\sum_{k=1}^n \tail_\ttd^{\mathtt c}(k) - n \tail_\ttd^{\mathtt c}(n)}{\sum_{k=1}^n \tail_\ttd^{\mathtt c}(k)}
\;\underset{n\to\infty}{\sim}\;
\frac{\sum_{k=1}^n \tail_\ttd(k) - n \tail_\ttd(n)}{\sum_{k=1}^n \tail_\ttd(k)}=
1- \frac{n \tail_\ttd(n)}{\sum_{k=1}^n \tail_\ttd(k)}.
\end{equation*}
Indeed, the numerators around the equivalent symbol are nothing but truncated means of the lengths of runs and go to infinity. Consequently, the proof follows from the two latter equations and the expression given in (\ref{kgen}).
\end{proof}

Now we apply this lemma to the study of the recurrence and the transience of persistent random walks when the associated distribution tails are of the same order. As the previous Remark \ref{asymp-rem} has already emphasized, the recurrent or the transient behaviour of a persistent random walk, provided the persistence times are not integrable, depend only on the asymptotic properties of their distribution tails. More precisely, the following property holds.

\begin{prop}[same order tails]\label{sameorder} Two persistent random walks $S$ and $\widetilde S$ are simultaneously recurrent or transient if their associated distribution tails of the lengths of runs satisfy, for any $\ell\in\{\ttu,\ttd\}$,
\begin{equation*}\label{comptail}
\tail_\ell(n) \asymp \widetilde{\tail}_\ell(n).
\end{equation*}

\end{prop}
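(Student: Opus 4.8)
The plan is to reduce Proposition \ref{sameorder} to an application of the asymptotic comparison Lemma \ref{asymp-comp} by sandwiching. Since $\tail_\ell(n)\asymp\widetilde\tail_\ell(n)$ for each $\ell\in\{\ttu,\ttd\}$, there is a constant $c\geq 1$ such that for all large $n$ one has $c^{-1}\widetilde\tail_\ell(n)\leq\tail_\ell(n)\leq c\,\widetilde\tail_\ell(n)$. The idea is that multiplying a distribution tail by a bounded-above-and-below factor cannot change the finiteness or infiniteness of the Erickson-type quantities $K_{\ttu\mid\ttd}$ and $K_{\ttd\mid\ttu}$, because those quantities are built from ratios of tails and of partial sums of tails, and a constant factor $c$ passes through such ratios costing only a bounded multiplicative constant. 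Hence both $S$ and $\widetilde S$ share the same convergence status for the pair $(K_{\ttu\mid\ttd},K_{\ttd\mid\ttu})$, and Theorem \ref{undefinedrt} then gives that they are simultaneously recurrent or transient (and even transient to the same infinity).

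First I would make the comparison usable by Lemma \ref{asymp-comp}, which requires genuine inequalities of the tails rather than mere commensurability. To do this I introduce an auxiliary persistent random walk $S^\ast$ whose tails are $\tail_\ell^\ast(n):=\min\{\tail_\ell(n),\widetilde\tail_\ell(n)\}$ for $\ell=\ttu$ and $\tail_\ttd^\ast(n):=\max\{\tail_\ttd(n),\widetilde\tail_\ttd(n)\}$, or more simply I work directly with the explicitly scaled tails $c^{-1}\widetilde\tail_\ttu$ and $c\,\widetilde\tail_\ttd$; one must only check these remain admissible and bona fide non-increasing tails of lengths of runs, which holds because scaling by a constant and truncating at $1$ preserves monotonicity and the divergence condition \eqref{a1b}. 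With such an intermediary in hand, I would apply Lemma \ref{asymp-comp} twice in each direction: once to pass from $S$ to $\widetilde S$ and once to pass back, using that \eqref{queues-asymptotiques} holds after the constant rescaling for all large $n$. Chaining the implications \eqref{compJ} then yields $K_{\ttu\mid\ttd}=\infty\Leftrightarrow\widetilde K_{\ttu\mid\ttd}=\infty$, and symmetrically for $K_{\ttd\mid\ttu}$.

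The cleanest route, which I would prefer, avoids constructing an intermediary walk altogether and instead argues directly on the series \eqref{kgen}. I would show that $\tail_\ell(n)\asymp\widetilde\tail_\ell(n)$ forces $\sum_{k=1}^n\tail_\ell(k)\asymp\sum_{k=1}^n\widetilde\tail_\ell(k)$ (summing an $\asymp$ relation preserves it since all terms are non-negative), so that the main factor $\tail_{\ttu}(n)/\sum_{k\leq n}\tail_\ttd(k)$ in $K_{\ttu\mid\ttd}$ is of the same order as its tilde analogue. The error factor $1-n\tail_\ttd(n)/\sum_{k\leq n}\tail_\ttd(k)$, being a truncated-mean ratio as identified in \eqref{positif}, would need to be shown comparable for the two walks as well; here I would reuse the observation from the proof of Lemma \ref{asymp-comp} that this quantity equals $\E[\tau^\ttd\mathds 1_{\tau^\ttd\leq n}]/\sum_{k\leq n}\tail_\ttd(k)$ up to a shift, so an $\asymp$ on the tails propagates to an $\asymp$ on both numerator and denominator. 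The term-by-term comparability of the summands then makes $K_{\ttu\mid\ttd}$ and $\widetilde K_{\ttu\mid\ttd}$ converge or diverge together.

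The main obstacle I anticipate is precisely the error factor \eqref{error-term}: unlike the leading ratio, it is a \emph{difference} of comparable quantities, and an $\asymp$ relation is not preserved under subtraction, so one cannot blindly conclude that $1-n\tail_\ttd(n)/\sum_{k\leq n}\tail_\ttd(k)$ and its tilde version are of the same order. I would handle this by \emph{not} trying to compare the error factors directly but rather by invoking the equivalent $\widetilde K$ reduction used in Lemma \ref{asymp-comp}: the numerator $\sum_{k\leq n}\tail_\ttd(k)-n\tail_\ttd(n)$ is a truncated mean that tends to infinity (because $\Theta_\ttd(\infty)=\infty$), so the error factor's role is only to contribute a slowly varying correction that is the same—up to the asymptotic equivalence already established in the proof of Lemma \ref{asymp-comp}—for both walks. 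This is exactly the configuration in which the intermediary-walk argument via Lemma \ref{asymp-comp} is safest, so if the direct computation proves delicate I would fall back on the sandwiching construction, whose correctness is guaranteed by the lemma as stated.
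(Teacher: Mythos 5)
Your fallback route (the intermediary walk) is essentially the paper's proof, and you correctly sense where the difficulty sits, namely the error factor \eqref{error-term}; but the way you close the argument has a genuine gap. Lemma \ref{asymp-comp} is one-directional: its hypothesis \eqref{queues-asymptotiques} orders the two walks (smaller up-tails and larger down-tails on one side), and its conclusion \eqref{compJ} flows from that walk toward the other. Whichever intermediary you build --- your pointwise min/max, or your scaled tails $c^{-1}\widetilde\tail_\ttu$ and $c\,\widetilde\tail_\ttd$ --- it ends up on the same side of \emph{both} $S$ and $\widetilde S$, so the two implications the lemma furnishes both flow out of (or both into) the intermediary and cannot be chained into $K_{\ttu\mid\ttd}=\infty\Leftrightarrow\widetilde K_{\ttu\mid\ttd}=\infty$. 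Indeed, chaining $K_{\ttu\mid\ttd}=\infty\Rightarrow K^{*}_{\ttu\mid\ttd}=\infty\Rightarrow\widetilde K_{\ttu\mid\ttd}=\infty$ through the lemma would force $\tail_\ttu\leq\tail^{*}_\ttu\leq\widetilde\tail_\ttu$ and $\tail_\ttd\geq\tail^{*}_\ttd\geq\widetilde\tail_\ttd$, i.e.\ a genuine pointwise ordering of $S$ and $\widetilde S$, which is precisely what $\asymp$ does not provide. One of the two comparisons must therefore be done by hand, and that is what the paper does: it takes $\widetilde\tail^{\mathtt c}_\ttu:=c_\ttu\widetilde\tail_\ttu$ and $\widetilde\tail^{\mathtt c}_\ttd:=c_\ttd^{-1}\widetilde\tail_\ttd$ (note the directions, chosen so that the intermediary dominates $S$ and the lemma gives $K_{\ttu\mid\ttd}=\infty\Rightarrow\widetilde K^{\mathtt c}_{\ttu\mid\ttd}=\infty$), and then proves by a direct computation on the series \eqref{kgen} that the general terms of $\widetilde K^{\mathtt c}_{\ttu\mid\ttd}$ and $\widetilde K_{\ttu\mid\ttd}$ are asymptotically proportional (ratio $c_\ttu c_\ttd$), hence simultaneously infinite; the converse and the $\ttd\mid\ttu$ statements follow by exchanging $S\leftrightarrow\widetilde S$ and $\ttu\leftrightarrow\ttd$.

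The reason this last computation is safe --- and the reason your preferred ``cleanest route'' is not --- is that the rescaling is by an \emph{exact} constant, not a mere $\asymp$. Writing the error factor as in \eqref{positif}, a truncated mean divided by a partial sum of tails, an exact scaling of $\tail_\ttd$ multiplies numerator and denominator by the same constant up to additive constants, and both diverge since $\Theta_\ttd(\infty)=\infty$; so the error factor is asymptotically unchanged. Under a general relation $\tail_\ttd\asymp\widetilde\tail_\ttd$, by contrast, the truncated means $\sum_{k\leq n}\tail_\ttd(k)-n\tail_\ttd(n)$ and $\sum_{k\leq n}\widetilde\tail_\ttd(k)-n\widetilde\tail_\ttd(n)$ are differences of merely comparable quantities and need not be of the same order (one tail can be locally flat on long stretches where the other keeps decreasing), so the term-by-term comparison of $K_{\ttu\mid\ttd}$ with $\widetilde K_{\ttu\mid\ttd}$ on which your second route rests can fail --- exactly the obstacle you flagged. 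Your proposed patch does not repair it: the reduction to $\widetilde K_{\ell_1\mid\ell_2}$ is only valid when the error factor stays away from $0$, which is not guaranteed, and the equivalence established inside the proof of Lemma \ref{asymp-comp} concerns a tail and its finite modification, not two commensurable tails. So drop the direct route, keep the intermediary, and replace the illegitimate second application of Lemma \ref{asymp-comp} by the explicit constant-scaling comparison of the series; all the ingredients for that step (constants passing through the main ratio, divergence of the truncated means) are already scattered in your write-up, and once assembled this way your proof coincides with the paper's.
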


\begin{proof}[Proof of proposition \ref{sameorder}]
From the assumptions follow that there exist positive constants $c_{\mathtt u}$ and $c_{\mathtt d}$ such that for $n$ sufficiently large,
\begin{equation*}
\tail_\ttu(n) \leq c_{\ttu} \widetilde{\tail}_\ttu(n)
\quad\textrm{and}\quad
\tail_\ttd(n) \geq c_{\ttd}^{-1} \widetilde{\tail}_\ttd(n).
\end{equation*}
Considering the following truncated distribution tails satisfying for all sufficiently large $n$,
\begin{equation*}
\widetilde \tail_\ttu^{\mathtt c}(n):=c_{\ttu}\widetilde{\tail}_\ttu(n)
\quad\textrm{and}\quad
\widetilde {\tail}_\ttd^{\mathtt c}(n):=c_{\ttd}^{-1} \widetilde{\tail}_\ttd(n).
\end{equation*}
Then according to Lemma \ref{asymp-comp}  the corresponding quantities (\ref{kgen}) satisfy
\begin{equation*}
K_{\ttu\mid\ttd} = \infty\;\Longrightarrow\; \widetilde K_{\ttu\mid\ttd}^{\mathtt c}=\infty.
\end{equation*}
Besides, one can see that the general terms of the series defining $\widetilde K_{\ttu\mid\ttd}^{\mathtt c}$ and $\widetilde K_{\ttu\mid\ttd}$ are equivalent, namely
\begin{equation*}
\left (1-\frac{n \widetilde{\tail}_{\ttd}^{\mathtt c}(n)}{\sum_{k=1}^n \widetilde{\tail}_{\ttd}^{\mathtt c}(k)} \right ) \frac{\widetilde{\mathcal T}_{\ttu}^{\mathtt c}(n)}{\sum_{k=1}^{n} \widetilde{\mathcal T}_{\ttd}^{\mathtt c}(k)}\underset{n\to\infty}{\sim}
c_{\ttu}c_{\ttd}\left (1-\frac{n \widetilde{\tail}_{\ttd}(n)}{\sum_{k=1}^n \widetilde{\tail}_{\ttd}(k)} \right ) \frac{\widetilde{\mathcal T}_{\ttu}(n)}{\sum_{k=1}^{n} \widetilde{\mathcal T}_{\ttd}(k)}.
\end{equation*}
Therefore if $K_{\ttu\mid\ttd}$ is infinite, so is $\widetilde K_{\ttu\mid\ttd}$ and conversely by symmetry of the problem. The proposition follows from Theorem \ref{cas-sans-drift}.
\end{proof}


\subsection{Bounded perturbation criterion and application to random perturbations}

Let $S$ be a persistent random walk (without drift) associated with the probabilities  of change of directions still denoted by $(\alpha^{\ell}_{n})$ for $\ell\in\{\ttu,\ttd\}$. By a perturbation of this persistent random walk, we mean sequences  $(\gamma^{\ell}_n)$ satisfying, for all $n\geq 1$,
\begin{equation}\label{perturb}
\widetilde{\alpha}_{n}^{\ell}:=\alpha_{n}^{\ell}+\gamma_{n}^{\ell}\in[0,1].
\end{equation}
In addition, we say that the perturbation is bounded if for any $\ell\in\{\ttu,\ttd\}$,
\begin{equation}\label{bounded-per}
\left\{\sum_{k=1}^n \log\left(1-\frac{\gamma^{\ell}_{k}}{1-\alpha^{\ell}_{k}}\right) : n\geq 1\right\}	\quad\mbox{is bounded}.
\end{equation}
It holds if for instance
\begin{equation}\label{bounded-per2}
\limsup_{n\to\infty}\alpha_{n}^{\ell}<1, \quad
\left\{\sum_{k=1}^n \gamma_{k}^{\ell} : n\geq 1\right\}\quad\mbox{is bounded,}\quad \mbox{and}\quad \sum_{n=1}^\infty (\gamma_n^\ell)^2 < \infty.
\end{equation}

In the sequel, we shall denote by $\widetilde S$ the persistent random walk associated with the probabilities of change of directions given in (\ref{perturb}). The proposition below shows that bounded perturbations do not change the recurrence or the transience behaviour. We point out that it generally fails in the well-defined drift case even for compactly supported perturbations.

\begin{prop}[bounded perturbations]\label{bounded-per3} Under a bounded perturbation the drift remains undefined. Moreover, the original and perturbed persistent random walks $S$ and $\widetilde{S}$ are simultaneously recurrent or transient.
\end{prop}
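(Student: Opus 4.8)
The plan is to recognize that the boundedness condition \eqref{bounded-per} is nothing but the same-order condition on the distribution tails demanded by Proposition \ref{sameorder}, after which both assertions follow almost immediately. First I would form the ratio of the two tail families. From \eqref{def-tail} one has $\tail_\ell(n)=\prod_{k=1}^{n-1}(1-\alpha_k^\ell)$, and since $\widetilde{\alpha}_k^\ell=\alpha_k^\ell+\gamma_k^\ell$ also $\widetilde{\tail}_\ell(n)=\prod_{k=1}^{n-1}(1-\alpha_k^\ell-\gamma_k^\ell)$. The standing hypothesis $\Theta_\ttu(\infty)=\Theta_\ttd(\infty)=\infty$ forces $\alpha_k^\ell<1$ for every $k$ (otherwise the product would eventually vanish and the mean would be finite), while the finiteness of every term in \eqref{bounded-per} forces $\widetilde{\alpha}_k^\ell<1$; hence each factor below is a well-defined positive number. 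Taking the logarithm of the telescoping ratio yields, for $\ell\in\{\ttu,\ttd\}$,
\[
\log\frac{\widetilde{\tail}_\ell(n)}{\tail_\ell(n)}=\sum_{k=1}^{n-1}\log\left(1-\frac{\gamma_k^\ell}{1-\alpha_k^\ell}\right).
\]

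The content of \eqref{bounded-per} is exactly that the right-hand side stays bounded, both from above and from below, as $n\to\infty$. Exponentiating, I obtain a constant $c_\ell\geq 1$ with $c_\ell^{-1}\,\tail_\ell(n)\leq\widetilde{\tail}_\ell(n)\leq c_\ell\,\tail_\ell(n)$ for all $n$, that is $\tail_\ell(n)\asymp\widetilde{\tail}_\ell(n)$ for both letters. This settles the first assertion at once: since $\widetilde{\Theta}_\ell(\infty)=\sum_{n\geq 1}\widetilde{\tail}_\ell(n)\asymp\sum_{n\geq 1}\tail_\ell(n)=\Theta_\ell(\infty)=\infty$, the perturbed means remain infinite, so $\widetilde{S}$ still has an undefined drift and we stay within the framework of Section \ref{und-drift}. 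The second assertion then follows directly from Proposition \ref{sameorder} applied to $S$ and $\widetilde{S}$, whose hypothesis $\tail_\ell\asymp\widetilde{\tail}_\ell$ we have just verified.

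I expect no genuine obstacle; the only delicate points are bookkeeping ones, and it is worth flagging exactly where each piece of the hypothesis is consumed. One must ensure the logarithms are meaningful, which is why I record at the outset that both walks have transitions strictly below $1$ (for $S$ this comes from the infinite-mean assumption, for $\widetilde{S}$ from \eqref{bounded-per} itself). And one must genuinely use the \emph{two-sidedness} of \eqref{bounded-per}: it is precisely the lower bound on the logarithmic sum that upgrades a one-sided tail domination into the full equivalence $\asymp$, which is what Proposition \ref{sameorder} needs in order to run its symmetric comparison in both directions. Note that the proposition as stated only invokes the bounded perturbation condition \eqref{bounded-per}, so the present argument is complete without addressing the concrete sufficient criterion \eqref{bounded-per2}; the latter would be a separate elementary check, via $\log(1-x)=-x+O(x^2)$, whose delicate ingredient is the boundedness of the first-order partial sums $\{\sum_{k=1}^n\gamma_k^\ell\}$ rather than anything appearing in the proof above.
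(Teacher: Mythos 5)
Your proof is correct and is essentially the paper's own argument, spelled out in more detail: the paper's proof consists precisely of the telescoping identity $\log(\widetilde{\tail}_\ell(n))=\log(\tail_\ell(n))+\sum_k \log\bigl(1-\gamma_k^\ell/(1-\alpha_k^\ell)\bigr)$, from which the bounded-perturbation hypothesis gives $\tail_\ell \asymp \widetilde{\tail}_\ell$ and Proposition \ref{sameorder} concludes. Your additional bookkeeping (strict inequality of the transitions so the logarithms make sense, the explicit deduction that $\widetilde{\Theta}_\ell(\infty)=\infty$, and the role of two-sided boundedness) fills in exactly what the paper leaves implicit, so there is no substantive difference.
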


\begin{proof}[Proof of Proposition \ref{bounded-per3}]
Simply apply Proposition \ref{sameorder} with the distribution tails associated with the persistent random walk $S$ and the perturbed one $\widetilde S$ noting that
\begin{equation*}
\log(\widetilde \tail_{\ell}(n))  = \log(\tail_{\ell}(n)) +\sum_{k=1}^n \log\left(1-\frac{\gamma^{\ell}_{k}}{1-\alpha^{\ell}_{k}}\right).
\end{equation*}
\end{proof}
This criterion leads to the next interesting example of a persistent random walk in a particular random environment. Given a probability space $(\Omega,\mathcal F,\mathbb Q)$ (the environment) we consider two sequences (not necessarily independent) of independent random variables $(A_{n}^{\ttu})$ and $(A_{n}^{\ttd})$ (not necessarily identically distributed) taking values in $[0,1)$ with $\mathbb Q$-probability one. We assume furthermore that their mean sequences $(\alpha^{\ttu}_{n})$ and $(\alpha^{\ttd}_{n})$ defined by
\begin{equation*}
\alpha^{\ttu}_{n}:=\mathbb E[A_{n}^{\ttu}]\in[0,1)\quad\mbox{and}\quad \alpha^{\ttd}_{n}:=\mathbb E[A_{n}^{\ttd}]\in[0,1),
\end{equation*}
lead to a persistent random walk $S$ with an undefined drift. Besides, we introduce the variance sequences $(v^{\ttu}_{n})$ and $(v^{\ttd}_{n})$ defined by
\begin{equation*}
v_{n}^{\ttu}:=\mathbb V[A_{n}^{\ttu}]\quad\mbox{and}\quad  v^{\ttd}_{n}:=\mathbb V[A_{n}^{\ttd}],
\end{equation*}
and for $\mathbb Q$-almost all $\omega$, we denote by $S^{\omega}$ the persistent random walk in the random medium $\omega\in\Omega$, that is the persistent random walk associated with the transitions $(A_{n}^{\ttu}(\omega))$ and $(A_{n}^{\ttd}(\omega))$.

\begin{prop}[random perturbations]\label{randomperturb}
Assume that for any $\ell\in\{\ttu,\ttd\}$,
\begin{equation} \label{condition-var}
\limsup_{n \to \infty} \alpha_n^\ell < 1 \quad \mbox{and}
\quad \sum_{n=1}^{\infty} v_{n}^{\ell} < \infty.
\end{equation}
Then, for $\mathbb Q$-almost all $\omega\in\Omega$, the drift of $S^{\omega}$ remains undefined and the latter persistent random walk is recurrent or transient simultaneously with the so called mean-persistent random walk $S$.
\end{prop}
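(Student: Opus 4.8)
The plan is to realise $S^{\omega}$ as a bounded perturbation of the mean-persistent random walk $S$ for $\mathbb{Q}$-almost every $\omega$, and then to invoke Proposition~\ref{bounded-per3} pointwise in $\omega$. To this end I would set, for each $\ell\in\{\ttu,\ttd\}$ and $n\geq 1$,
\[
\gamma_n^\ell:=A_n^\ell-\alpha_n^\ell,
\]
so that $\widetilde\alpha_n^\ell=\alpha_n^\ell+\gamma_n^\ell=A_n^\ell\in[0,1)$ and $S^{\omega}$ is exactly the persistent random walk obtained from $S$ through the (now random) perturbation $(\gamma_n^\ell)$ in the sense of \eqref{perturb}. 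By construction, for each fixed $\ell$ the variables $(\gamma_n^\ell)_n$ are independent and centred, with $\mathbb{E}[(\gamma_n^\ell)^2]=v_n^\ell$.

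First I would verify that the sufficient condition \eqref{bounded-per2} (which implies \eqref{bounded-per}) holds $\mathbb{Q}$-almost surely for both letters. The requirement $\limsup_n\alpha_n^\ell<1$ is part of the hypothesis \eqref{condition-var}. For the square-summability, monotone convergence gives
\[
\mathbb{E}\Bigl[\sum_{n\geq 1}(\gamma_n^\ell)^2\Bigr]=\sum_{n\geq 1}v_n^\ell<\infty,
\]
so that $\sum_{n}(\gamma_n^\ell)^2<\infty$ for $\mathbb{Q}$-almost all $\omega$.

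The main point is the boundedness of the partial sums $\bigl\{\sum_{k=1}^n\gamma_k^\ell\bigr\}_{n\geq 1}$. Here I would appeal to Kolmogorov's theorem on convergence of random series: since the $(\gamma_k^\ell)_k$ are independent, centred and satisfy $\sum_k v_k^\ell<\infty$, the series $\sum_k\gamma_k^\ell$ converges $\mathbb{Q}$-almost surely, and in particular its partial sums are $\mathbb{Q}$-a.s.\ bounded. Equivalently, one can observe that these partial sums form an $L^2$-bounded martingale, which converges almost surely. Intersecting the resulting full-measure events over $\ell\in\{\ttu,\ttd\}$ yields a set of full $\mathbb{Q}$-measure on which all three requirements of \eqref{bounded-per2} are satisfied for both letters simultaneously.

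Finally, for each such $\omega$ the deterministic sequence $(\gamma_n^\ell(\omega))$ is a bounded perturbation in the sense of \eqref{bounded-per}, so Proposition~\ref{bounded-per3} applies to the pair $(S,S^{\omega})$: the drift of $S^{\omega}$ remains undefined, and $S^{\omega}$ is recurrent (resp.\ transient) if and only if $S$ is. I do not anticipate a serious obstacle beyond correctly invoking the convergence theorem; the only mild care needed is that independence is assumed within each family $(A_n^\ell)_n$ but not between $(A_n^\ttu)$ and $(A_n^\ttd)$, which is harmless since the argument is run separately for each $\ell$ and the two full-measure events are intersected.
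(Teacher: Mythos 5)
Your proposal is correct and follows essentially the same route as the paper: both decompose $A_n^\ell=\alpha_n^\ell+(A_n^\ell-\alpha_n^\ell)$, use Kolmogorov's one-series theorem to get $\mathbb Q$-a.s.\ convergence (hence boundedness) of the partial sums of the centred residuals, verify the square-summability and the $\limsup$ condition so that \eqref{bounded-per2} holds almost surely, and then apply Proposition~\ref{bounded-per3} pointwise in $\omega$. Your monotone-convergence argument for $\sum_n(\gamma_n^\ell)^2<\infty$ is in fact slightly cleaner than the paper's appeal to independence at that step, but this is a cosmetic difference, not a different method.
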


\begin{proof}[Proof of Proposition \ref{randomperturb}]
First note that for any $\ell\in\{\ttu,\ttd\}$ and $\omega\in\Omega$, we can write
\begin{equation*}
A_n^\ell(\omega)=\alpha_n^\ell+(A_n^\ell(\omega)-\alpha_n^\ell),
\end{equation*}
so that $S^{\omega}$ can be seen as a random perturbation of $S$. Note that the random residual terms in the previous decomposition are centered and independent as $n$ varies. Moreover, the condition on the variances in \eqref{condition-var} implies by the Kolmogorov's one-series theorem (see for instance \cite [Theorem 3.10, p. 46]{varadhan:01}) that
\begin{equation}
\mathbb Q\left(\lim_{n\to\infty}\sum_{k=1}^n (A_k^\ell-\alpha_k^\ell)\in\mathbb R\quad\mbox{exists}\right)=1.
\end{equation}
Again by \eqref{condition-var} and the independence of the $A_{n}^{\ell}$'s, for any $\ell\in\{\ttu,\ttd\}$, we get that
\begin{equation}
\sum_{n=1}^\infty (A_n^\ell-\alpha_n^\ell)^2 < \infty\quad\mathbb Q-a.s.
\end{equation}
Then the proposition follows from Proposition \ref{bounded-per3} and more precisely conditions given in (\ref{bounded-per2}).
\end{proof}

In the following example, we retrieve the result on harmonic transitions given in (\ref{harmonic}).

\subsubsection*{Example of random harmonic transitions}

{\it
Let us consider for any $\ell\in\{\ttu,\ttd\}$ a sequence of independent random variables  $(\varepsilon_{n}^{\ell})$ with common means $\lambda_{\ell}\in(0,1)$, almost surely bounded, and such that for all $n\geq 1$,
\begin{equation*}
A_{n}^{\ell}:=\frac{\varepsilon^{\ell}_{n}}{n}\in[0,1)\quad a.s..
\end{equation*}
Then the persistent random walk in the random environment given by the random probabilities of change of directions $(A_{n}^{\ell})$ is almost surely recurrent or almost surely transient according as the means $\lambda_{\ttu}$ and $\lambda_{\ttd}$ are equal or not.}


\subsection{Relative thinning and thickening of the perturbations near the  boundaries}

As opposed to a bounded perturbation, a perturbation is said unbounded if (\ref{bounded-per}) is not satisfied. Even though general criteria for unbounded perturbations of persistent random walks can be investigated, these criteria are tedious to write precisely and irrelevant to have an insight into the phenomena. Nevertheless, we highlight some families of examples.

To this end, introduce for any $p\geq 0$ the so called boundaries defined for large $n$ by
\begin{equation}\label{boundaries}
\widecheck \beta^{(p)}_{n}:=\frac{1}{n\log(n)\cdots\log_{[p]}(n)}\quad\mbox{and}\quad
\quad\widehat\beta_{n}^{(p)}:=\frac{1}{n}+\frac{1}{n\log(n)}+\cdots+\frac{1}{n\log(n)\cdots\log_{[p]}(n)}.
\end{equation}

Close meaning $p$ large in \eqref{boundaries}, we show that the closer we are to the sequence given in the right-hand side of \eqref{boundaries} (the so called upper boundary) the thinner the perturbation needs to be to avoid a phase transition between recurrence and transience. Conversely, the closer we are to the sequence given in the left-hand side of \eqref{boundaries} (the so called lower boundary) the thicker perturbation can be while the recurrence or transience behaviour remains unchanged.

We start with the toy example of harmonic transitions. The proofs of the following three propositions are based on the application of Theorem \ref{cas-sans-drift} and the determination of the order of distribution tails given in (\ref{tailasymp}). Their proofs are left to the reader.

\begin{prop}[harmonic case between the boundaries]
Let us consider for any $\lambda\in(0,1)$ and $c\in\mathbb R$, probabilities of change of directions staying away from one and satisfying, for $n$ sufficiently large,
\begin{equation}\label{perturb1}
\alpha_{n}^{\ttu}:=\frac{\lambda}{n} \quad\mbox{and}\quad \alpha_{n}^{\ttd}:=\frac{\lambda}{n}+\frac{c}{n\log(n)}.
\end{equation}
Then the associated persistent random walks are recurrent or  transient according as $|c|\leq 1$ or $|c|>1$. In particular, the relative maximal perturbation while remaining recurrent is of order
\begin{equation*}
\frac{|\alpha_{n}^{\ttd}-\alpha_{n}^{\ttu}|}{\alpha_{n}^{\ttu}}=
\frac{1}{\lambda\log(n)}.
\end{equation*}
\end{prop}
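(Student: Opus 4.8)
The plan is to place ourselves in the undefined-drift regime and then feed the precise order of the distribution tails into the criterion of Theorem~\ref{cas-sans-drift}, in its simplified incarnation through the series $\widetilde K_{\ttu\mid\ttd}$ and $\widetilde K_{\ttd\mid\ttu}$. First I would compute the tails. Expanding $\log \tail_\ell(n)=\sum_{k=1}^{n-1}\log(1-\alpha^\ell_k)$ with $\log(1-\alpha^\ell_k)=-\alpha^\ell_k+O((\alpha^\ell_k)^2)$ and using $\sum_{k\le n}1/k\sim\log n$ together with $\sum_{k\le n}1/(k\log k)\sim\log\log n$, one gets
\begin{equation*}
\tail_\ttu(n)\asymp\frac{1}{n^\lambda}\quad\text{and}\quad\tail_\ttd(n)\asymp\frac{1}{n^\lambda(\log n)^c},
\end{equation*}
which is exactly (\ref{tailasymp}) applied to (\ref{ex4}); the point worth stressing is that this direct computation is valid for \emph{every} real $c$, in particular for $c\le 0$, where (\ref{ex4}) does not literally have positive parameters. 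Since $\lambda\in(0,1)$, both tails are non-summable, so $\Theta_\ttu(\infty)=\Theta_\ttd(\infty)=\infty$ and Theorem~\ref{cas-sans-drift} indeed governs the recurrence/transience.

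Next I would estimate the partial sums by regular variation (Karamata, see \cite{BGT}): writing $\tail_\ell(n)=n^{-\lambda}L_\ell(n)$ with $L_\ell$ slowly varying yields
\begin{equation*}
\sum_{k=1}^n\tail_\ttu(k)\asymp n^{1-\lambda}\quad\text{and}\quad\sum_{k=1}^n\tail_\ttd(k)\asymp n^{1-\lambda}(\log n)^{-c},
\end{equation*}
together with the limit $n\tail_\ell(n)/\sum_{k=1}^n\tail_\ell(k)\to 1-\lambda\in(0,1)$. The latter shows that the error factor $1-n\tail_\ell(n)/\sum_{k\le n}\tail_\ell(k)$ stays bounded away from $0$, so that, as noted below (\ref{error-term}), $K_{\ell_1\mid\ell_2}=\infty$ if and only if $\widetilde K_{\ell_1\mid\ell_2}=\infty$; I may therefore work with the lighter series.

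Plugging the estimates in gives $\widetilde K_{\ttu\mid\ttd}\asymp\sum_n(\log n)^c/n$ and $\widetilde K_{\ttd\mid\ttu}\asymp\sum_n(\log n)^{-c}/n$. By the integral test $\sum_n(\log n)^a/n$ diverges exactly when $a\ge -1$, whence $\widetilde K_{\ttu\mid\ttd}=\infty\Leftrightarrow c\ge -1$ and $\widetilde K_{\ttd\mid\ttu}=\infty\Leftrightarrow c\le 1$. Theorem~\ref{cas-sans-drift} then yields recurrence precisely when both series diverge, i.e. $-1\le c\le 1$; transience to $+\infty$ when $c>1$ (only $\widetilde K_{\ttu\mid\ttd}=\infty$) and to $-\infty$ when $c<-1$. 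The closing identity is immediate from $|\alpha_n^\ttd-\alpha_n^\ttu|=|c|/(n\log n)$ and $\alpha_n^\ttu=\lambda/n$. The only genuinely delicate points are the boundary values $|c|=1$, where one of the two series degenerates to $\sum_n 1/(n\log n)$ and must still be seen to diverge (keeping the walk recurrent up to and including $|c|=1$), and the justification that the $\asymp$ tail estimates hold uniformly in the sign of $c$; everything else is routine bookkeeping, which is why the proposition is stated as left to the reader.
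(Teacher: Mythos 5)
Your proof is correct and follows essentially the route the paper itself prescribes for this proposition (whose proof is left to the reader): determine the order of the distribution tails as in (\ref{tailasymp}), check that both mean lengths of runs are infinite, and apply Theorem \ref{cas-sans-drift}. Your extra bookkeeping --- re-deriving the tail asymptotics so that they are valid for negative $c$, and using Karamata to show the factor (\ref{error-term}) stays away from $0$ so the criterion can be run on the lighter series $\widetilde K_{\ttu\mid\ttd}$ and $\widetilde K_{\ttd\mid\ttu}$ --- is precisely the detail the authors omit, and it is sound because the expansion of $\log\tail_\ell(n)$ (with $\sum_k(\alpha_k^\ell)^2<\infty$) actually gives genuine asymptotic equivalence, hence regular variation, not merely the stated $\asymp$ bounds.
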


When $\lambda=1$ the permitted perturbation leaving unchanged the recurrent behaviour is negligible with respect to that in (\ref{perturb1}).  More precisely, this general phenomenon appears near the upper boundary.

\begin{prop}[Thining near the upper boundary]
Let us consider for any $p\geq 0$ and $c\in\mathbb R$, probabilities of change of directions staying away from one and satisfying, for $n$ sufficiently large,
\begin{equation*}
\alpha_{n}^{\ttu}:=\widehat \beta_{n}^{(p)}\quad\mbox{and}\quad
\alpha_{n}^{\ttd}:=\widehat\beta_{n}^{(p)}+\frac{c}{n\log(n)\cdots\log_{[p+2]}(n)}.
\end{equation*}
Then the associated persistent random walks are recurrent or  transient according as $|c|\leq 1$ or $|c|>1$. In particular, the relative maximal perturbation leaving unchanged the recurrent behaviour is  of order
\begin{equation*}
\frac{|\alpha_{n}^{\ttd}-\alpha_{n}^{\ttu}|}{\alpha_{n}^{\ttu}}\leq
\frac{1}{\log(n)\cdots\log_{[p+2]}(n)}.
\end{equation*}
\end{prop}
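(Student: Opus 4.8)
The plan is to apply Theorem~\ref{cas-sans-drift} directly, which reduces the recurrence/transience dichotomy to deciding whether the series $J_{\ttu\mid\ttd}$ and $J_{\ttd\mid\ttu}$ (equivalently $K_{\ttu\mid\ttd}$ and $K_{\ttd\mid\ttu}$) converge or diverge. Since both sequences of transitions have a dominant harmonic term $\lambda_0/n$ with $\lambda_0 < 1$ coming from $\widehat\beta_n^{(p)}$ (recall $\widehat\beta_n^{(p)} \sim 1/n$), the means $\Theta_\ttu(\infty)$ and $\Theta_\ttd(\infty)$ are both infinite by the criterion in \eqref{ex2}, so we are genuinely in the undefined drift regime and Theorem~\ref{cas-sans-drift} is the right tool. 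First I would compute the asymptotics of the two distribution tails $\tail_\ttu(n)$ and $\tail_\ttd(n)$ using the expansion \eqref{tailasymp}: for $\alpha_n^\ttu = \widehat\beta_n^{(p)}$ one reads off $\lambda_0 = \cdots = \lambda_p = 1$ and obtains
\begin{equation*}
\tail_\ttu(n) \asymp \frac{1}{n\log(n)\cdots\log_{[p]}(n)},
\end{equation*}
while for $\alpha_n^\ttd = \widehat\beta_n^{(p)} + c/(n\log(n)\cdots\log_{[p+2]}(n))$ the extra term contributes an additional factor, giving
\begin{equation*}
\tail_\ttd(n) \asymp \frac{1}{n\log(n)\cdots\log_{[p]}(n)(\log_{[p+2]}(n))^{c}}.
\end{equation*}

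Next I would insert these tail asymptotics into the comparison quantity. Because both tails are regularly/slowly varying with the same leading behaviour up to the $(\log_{[p+2]})^{c}$ correction, the error term \eqref{error-term} is controlled (it stays away from $0$ up to logarithmic factors), so it is legitimate to work with the simplified series $\widetilde K_{\ell_1\mid\ell_2} = \sum_n \tail_{\ell_1}(n)/\sum_{k\le n}\tail_{\ell_2}(k)$. Here I would estimate the partial sums $\sum_{k=1}^n \tail_\ell(k)$ by comparison with an integral: since $\tail_\ell(n) \asymp 1/(n\log(n)\cdots\log_{[p]}(n))$ times a slowly varying factor, the partial sum behaves like $\log_{[p+1]}(n)$ up to the same slowly varying correction. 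Substituting into $\widetilde K_{\ttu\mid\ttd}$ and $\widetilde K_{\ttd\mid\ttu}$, the $n\log(n)\cdots\log_{[p]}(n)\log_{[p+1]}(n)$ factors combine to give the tail of a Bertrand-type series whose convergence is governed precisely by the exponent of the final $\log_{[p+2]}$ factor; a careful bookkeeping of the sign of $c$ (which raises the tail of one direction and lowers the other) should show that $\widetilde K_{\ttu\mid\ttd}=\infty$ and $\widetilde K_{\ttd\mid\ttu}=\infty$ simultaneously exactly when $|c|\le 1$, and that one of them becomes finite when $|c|>1$. The claimed bound on the relative perturbation $|\alpha_n^\ttd-\alpha_n^\ttu|/\alpha_n^\ttu$ is then immediate from $\alpha_n^\ttu\sim 1/n$ and the definition of the perturbing term.

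The main obstacle I anticipate is the delicate Bertrand-series threshold computation: the convergence of series of the form $\sum 1/(n\log n\cdots\log_{[p]}n\,(\log_{[p+1]}n)(\log_{[p+2]}n)^{\beta})$ hinges on whether $\beta>1$, and tracking how the exponent $c$ propagates through the ratio $\tail_{\ell_1}/\sum \tail_{\ell_2}$ — in particular confirming that the critical value is exactly $|c|=1$ and handling the boundary case $|c|=1$ correctly — requires careful asymptotic analysis rather than a crude $\asymp$ estimate, since at the threshold the same-order comparison of Proposition~\ref{sameorder} is too blunt to decide convergence. Because the authors explicitly leave this proof to the reader, I would present the reduction to the Bertrand-series criterion cleanly via \eqref{tailasymp} and Theorem~\ref{cas-sans-drift}, flagging that the symmetry $\ttu\leftrightarrow\ttd$ handles both series at once.
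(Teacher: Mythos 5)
Your proposal is correct and is precisely the argument the paper intends: the authors leave this proof to the reader, indicating only that it combines Theorem~\ref{cas-sans-drift} with the tail asymptotics \eqref{tailasymp}, and your computations (undefined drift, $\tail_\ttu(n)\asymp 1/(n\log n\cdots\log_{[p]}n)$, $\tail_\ttd(n)\asymp (\log_{[p+2]}n)^{-c}/(n\log n\cdots\log_{[p]}n)$, partial sums $\asymp\log_{[p+1]}n$ up to the same $(\log_{[p+2]}n)^{-c}$ correction, Bertrand threshold at $|c|=1$) fill this recipe in correctly. Two minor remarks: the parenthetical ``$\lambda_0<1$'' is a slip (here $\lambda_0=1$; infiniteness of both means follows instead from \eqref{ex2}, since $\alpha_n^{\ttu}=\widehat\beta_n^{(p)}$ and $\alpha_n^{\ttd}\le\widehat\beta_n^{(p+2)}$ for $n$ large, or directly from the non-summability of your tail asymptotics), and your closing worry is unfounded, because convergence of a positive series is invariant under term-wise same-order comparison, so the $\asymp$ estimates do settle the boundary cases $c=\pm 1$ (where $\widetilde K_{\ttd\mid\ttu}\asymp\sum 1/(n\log n\cdots\log_{[p+2]}n)=\infty$) --- this invariance is exactly the mechanism behind Proposition~\ref{sameorder}.
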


On the contrary, close to the lower boundary, the allowed perturbations to preserve the same type of behaviour are (relatively) larger than the previous ones.

\begin{prop}[Thicking near the lower boundary]\label{underbound}
Let us consider for any $p\geq 0$  and $c\in\mathbb R$, probabilities of change of directions staying away from one and satisfying, for $n$ sufficiently large,
\begin{equation*}
\alpha_{n}^{\ttu}:=\widecheck \beta_{n}^{(p)}\quad\mbox{and}\quad
\alpha_{n}^{\ttd}:=\widecheck\beta_{n}^{(p)}+\frac{c}{n\log(n)\cdots\log_{[p+1]}(n)}.
\end{equation*}
Then the associated persistent random walks are recurrent or  transient according as $|c|\leq 1$ or $|c|>1$. In particular, the relative maximal perturbation while remaining recurrent is  of order
\begin{equation*}
\frac{|\alpha_{n}^{\ttd}-\alpha_{n}^{\ttu}|}{\alpha_{n}^{\ttu}}\leq
\frac{1}{\log_{[p+1](n)}}.
\end{equation*}
\end{prop}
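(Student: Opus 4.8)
The plan is to invoke the recurrence/transience criterion of Theorem~\ref{cas-sans-drift}. Since both persistence times will turn out to have infinite mean, that theorem reduces everything to deciding which of the two series $J_{\ttu\mid\ttd},J_{\ttd\mid\ttu}$ of \eqref{jgen0} diverge. First I would pin down the order of the two tails: writing $\widecheck\beta^{(p)}_n$ as the case $\lambda_0=\dots=\lambda_{p-1}=0,\ \lambda_p=1$ of \eqref{ex4} and $\alpha^\ttd_n$ as the same with an additional coefficient $\lambda_{p+1}=c$, the estimate \eqref{tailasymp} yields
\[
\tail_\ttu(n)\asymp\frac{1}{\log_{[p]}(n)},\qquad
\tail_\ttd(n)\asymp\frac{1}{\log_{[p]}(n)\,(\log_{[p+1]}(n))^{c}} .
\]
Both are slowly varying, hence $\Theta_\ttu(\infty)=\Theta_\ttd(\infty)=\infty$ and we are squarely in the undefined-drift regime where Theorem~\ref{cas-sans-drift} applies.

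Next I would convert each $J$ into an elementary series. Because the transitions are prescribed \emph{exactly} for large $n$, the increment densities are explicit, $\PP(\tau^\ell=n)=\tail_\ell(n)\,\alpha^\ell_n$, while the denominators are supplied by the Karamata estimate $\sum_{k=1}^n\tail_\ell(k)\asymp n\,\tail_\ell(n)$ valid for slowly varying tails. Substituting into \eqref{jgen0}, the factor $(\log_{[p+1]}(n))^{c}$ cancels everywhere but in a single place, leaving
\[
J_{\ttu\mid\ttd}\asymp\sum_{n}\frac{(\log_{[p+1]}(n))^{c}}{n\log(n)\cdots\log_{[p]}(n)},\qquad
J_{\ttd\mid\ttu}\asymp\sum_{n}\frac{(\log_{[p+1]}(n))^{-c}}{n\log(n)\cdots\log_{[p]}(n)} .
\]
Their nature is read off by the substitution $u=\log_{[p+1]}(n)$, which turns the integral test into $\int^{\infty}u^{\pm c}\,du$; thus $J_{\ttu\mid\ttd}=\infty\Leftrightarrow c\ge-1$ and $J_{\ttd\mid\ttu}=\infty\Leftrightarrow c\le1$. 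Feeding this into Theorem~\ref{cas-sans-drift} gives recurrence exactly when both diverge, i.e.\ $-1\le c\le1$, transience to $+\infty$ for $c>1$ and to $-\infty$ for $c<-1$, precisely the dichotomy $|c|\le1$ versus $|c|>1$. The stated order of the relative perturbation then follows from $\alpha^\ttd_n-\alpha^\ttu_n=c\,\widecheck\beta^{(p+1)}_n$ and $\alpha^\ttu_n=\widecheck\beta^{(p)}_n$.

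The genuinely delicate point, and the reason I would work with $J$ rather than $K$, is that both tails are slowly varying, so the correction factor \eqref{error-term} tends to $0$ and the simplified quantity $\widetilde K$ is unavailable: this vanishing correction must not be discarded. Using $J$ sidesteps the issue because the prescribed form of $\alpha^\ell_n$ renders the density $\PP(\tau^\ell=n)=\tail_\ell(n)\alpha^\ell_n$ explicit, which is exactly why the statement fixes the transitions exactly and not merely up to equivalence. If one instead insists on the tail-only quantities $K$, the main obstacle becomes the estimate of \eqref{error-term}: by the identity underlying \eqref{positif} it equals $\E[\tau^\ell\mathds 1_{\tau^\ell\le n-1}]/\sum_{k=1}^n\tail_\ell(k)$, and a second-order Karamata/Euler--Maclaurin expansion shows it to be of order $1/(\log(n)\cdots\log_{[p]}(n))$ for \emph{both} $\ell$, remarkably independently of $c$, which reproduces the very same two series. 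Either route reduces to routine asymptotics once the tails are in hand; the only real subtlety is recognizing that the slowly varying regime forbids the usual simplification.
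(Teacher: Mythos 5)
Your overall route is precisely the one the paper intends: the paper gives no written proof of Proposition \ref{underbound}, stating only that it follows from Theorem \ref{cas-sans-drift} together with the tail asymptotics \eqref{tailasymp}, and that is exactly your plan. For every $p\geq 1$ your execution is correct: the tails $\tail_\ttu(n)\asymp 1/\log_{[p]}(n)$ and $\tail_\ttd(n)\asymp 1/(\log_{[p]}(n)(\log_{[p+1]}(n))^{c})$ are then genuinely slowly varying, so both means are infinite, Karamata gives $\sum_{k\leq n}\tail_\ell(k)\asymp n\,\tail_\ell(n)$, the two series of \eqref{jgen0} collapse to $\sum_n(\log_{[p+1]}(n))^{\pm c}\,\widecheck\beta^{(p)}_n$, and the substitution $u=\log_{[p+1]}(n)$ yields the dichotomy $|c|\leq 1$ versus $|c|>1$. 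Your side remarks are also sound: the identity behind \eqref{positif} does give \eqref{error-term} as a ratio of a truncated mean to a partial sum of tails, this factor does tend to $0$ here (so $\widetilde K$ is indeed unusable), and its order $1/(\log(n)\cdots\log_{[p]}(n))$ is independent of $c$, so the $K$-route reproduces the same series.

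The gap is the case $p=0$, which the statement includes (``for any $p\geq 0$'') and which your argument silently absorbs into the slowly varying regime. There $\alpha^\ttu_n=1/n$ and $\alpha^\ttd_n=1/n+c/(n\log n)$, whence $\tail_\ttu(n)\asymp 1/n$ and $\tail_\ttd(n)\asymp 1/(n(\log n)^{c})$: these are regularly varying of index $-1$, not slowly varying, and every step of your proof breaks at this critical index. First, $\Theta_\ttd(\infty)<\infty$ when $c>1$, so one is not even in the undefined-drift regime; second, the Karamata step fails, since $\sum_{k\leq n}\tail_\ttu(k)\asymp\log n$ whereas $n\,\tail_\ttu(n)\asymp 1$. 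Redoing the computation honestly for $p=0$ and $|c|\leq 1$ gives $J_{\ttu\mid\ttd}\asymp\sum_n 1/(n(\log n)^{1-c})$ and $J_{\ttd\mid\ttu}\asymp\sum_n 1/(n(\log n)^{1+c})$, which are simultaneously infinite only when $c=0$: the walk is transient for every $c\neq 0$, so at $p=0$ not only does your proof fail, the stated dichotomy itself does. This is consistent with the paper's own remark following the harmonic-case proposition that for $\lambda=1$ the admissible perturbation is negligible with respect to $c/(n\log n)$ (indeed $\widecheck\beta^{(0)}=\widehat\beta^{(0)}$, and it is the upper-boundary proposition, with its extra iterated-logarithm factor, that gives the correct scale there). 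You should therefore restrict your argument to $p\geq 1$, where it is complete, and explicitly flag that the borderline case $p=0$ requires a separate analysis under which the conclusion changes.
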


We focus now on some examples of lacunar perturbations of some symmetric persistent random walks (thus recurrent). They illustrate that such a perturbed persistent random walk can remain recurrent while the lacunes have in some sense a large density in the set of integers. As previously, these examples are not as general as possible but still contain the main ideas.

\subsubsection*{Example of harmonic lacunar perturbations along prime numbers}

{\it Consider $\mathbb P\subset \mathbb N$ the set of prime number and set, for any positive integer $r$,
\begin{equation*}
\mathbb P_{r}:=\bigcup_{k=1}^{r} k\mathbb P.
\end{equation*}
Introduce for $\lambda\in(0,1]$ the transition probabilities (staying away from one) given for large $n$ by
\begin{equation*}
\alpha_{n}^{\ttu}:=\frac{\lambda}{n}\quad\mbox{and}\quad \alpha_{n}^{\ttd}:=\frac{\lambda}{n}\mathds 1_{\mathbb N\setminus \mathbb P_{r}} (n).
\end{equation*}
Then we can prove that the associated persistent random walk is recurrent if and only if
\begin{equation*}
\left(\sum_{k=1}^{r}\frac{1}{k}\right)\lambda\leq 1.
\end{equation*}
In particular, when $r=1$, it is transient when $\lambda=1$ and recurrent otherwise. The proof is a consequence of our results and the prime number theorem which imply
\begin{equation*}
\sum_{p\in\mathbb P\cap[0,n]}\frac{1}{p}\underset{n\to\infty}{=}\log\log(n)+\mathcal O(1).
\end{equation*}

}

\subsubsection*{Example of random lacunar perturbations}

{\it Fix $p\in\mathbb N$ and consider independent Bernoulli random variables $(\varepsilon_{n})$ such that for $n$ large enough,
\begin{equation*}
\mathbb P(\varepsilon_{n}=0):=\frac{1}{\log_{[p+1]}(n)}.
\end{equation*}
By the second Borel-Cantelli Lemma \cite[Theorem 5.4.11., p. 218]{durrett2010}, it comes that
\begin{equation*}
\frac{1}{2}\sum_{k=1}^{n}\mathds 1_{\{\varepsilon_{n}=0\}}\underset{n\to\infty}{\sim}\frac{1}{\log_{[p+1]}(n)}\quad a.s..
\end{equation*}
Even though the lacunar set $\mathbb L_{p}:=\{n\geq 1 : \varepsilon_n=0\}$ is {\it a.s.\@} of density zero in the set of integers, these sets are somehow {\it a.s.\@} asymptotically of density one as $p$ tends to infinity. As a matter of fact, the function in the right-hand side of the latter equation is arbitrarily slow as $p$ tends to infinity.

Furthermore, we can see by applying  Propositions \ref{underbound} and \ref{randomperturb} above that the persistent random walks associated with the random transitions (staying away from one) given for $n$ sufficiently large by
\begin{equation*}
\alpha_{n}^{\ttu}:=\widecheck \beta_{n}^{(p)}\quad\mbox{and}\quad \alpha_{n}^{\ttd}:= \widecheck \beta_{n}^{(p)}\varepsilon_{n}=\widecheck \beta_{n}^{(p)}\mathds 1_{\mathbb L_{p}}(n),
\end{equation*}
are {\it a.s.\@} recurrent.}

\subsection{Perturbed probabilized context tree with grafts}

Consider the double infinite comb defined in Figure \ref{double-peigne} and attach to each finite leaf $c\in\mathcal C$  given in (\ref{leaf}) another (possibly void) context tree $\mathbb T_{c}$ (see Figure {\ref{double-peigne-2}}). Denote by $\mathcal C_{c}$ the leaves (possibly infinite) of $\mathbb T_{c}$ such that those of the resulting context tree are given by
\begin{equation*}
\widetilde{\mathcal C}:=\bigcup_{c\in\mathcal C}\mathcal C_{c}.
\end{equation*}
Then to each leaf $\tilde c\in\widetilde{\mathcal C}$ of the modified context tree corresponds a Bernoulli distribution $q_{\tilde c}$ on $\{\ttu,\ttd\}$ so that we can define the persistent random walk $\widetilde S$ associated with this enriched probabilized context tree. We introduce the persistent random walks $\widehat S$ and $\widecheck S$ built from the double infinite comb and the respective probabilities of change of directions given by
\begin{equation*}
\widecheck \alpha_{n}^{\ttu}:= \sup\{q_{\tilde c}(\ttd) : \tilde c\in  \mathcal C_{\ttu^{n}\ttd}\}\quad\mbox{and}\quad
\widecheck \alpha_{n}^{\ttd}:= \inf\{q_{\tilde c}(\ttu) : \tilde c\in  \mathcal C_{\ttd^{n}\ttu}\},
\end{equation*}
and
\begin{equation*}
\widehat \alpha_{n}^{\ttu}:= \inf\{q_{\tilde c}(\ttd) : \tilde c\in  \mathcal C_{\ttu^{n}\ttd}\}  \quad\mbox{and}\quad \widehat \alpha_{n}^{\ttd}:= \sup\{q_{\tilde c}(\ttu) : \tilde c\in  \mathcal C_{\ttd^{n}\ttu}\}.
\end{equation*}

\begin{figure}[!ht]
\begin{center}
\includegraphics[width=0.8\linewidth]{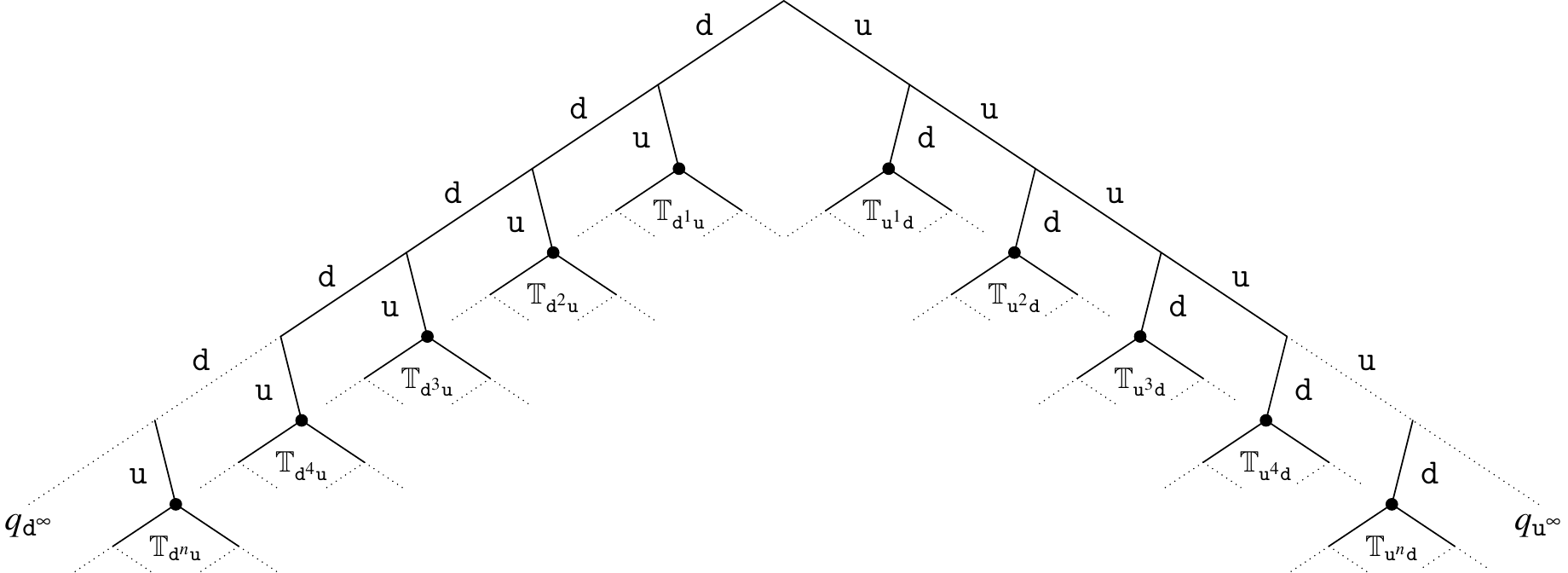}
\end{center}
\caption{\label{double-peigne-2}Probabilized context tree (grafting of the double infinite comb)}
\end{figure}

We can state the following lemma and give the following application whose proofs follow from the same coupling argument as in the proof of Lemma \ref{comp}. Remark that the situation of well-defined drift is allowed in this context.
\begin{lem}[comparison of trees]\label{trees} Assume that the persistent random walks $\widecheck S$ and $\widehat S$ are of the same recurrent or transient type, then $\widetilde S$ is recurrent or transient accordingly. More precisely, we have
\begin{equation*}
\limsup_{n\to\infty} \widecheck S_{n}=\infty\quad a.s.
\;\Longrightarrow \;
\limsup_{n\to\infty} \widetilde S_{n}=\infty\quad a.s.,
\end{equation*}
and
\begin{equation*}
\liminf_{n\to\infty} \widehat S_{n}=-\infty\quad a.s.
\;\Longrightarrow \;
\liminf_{n\to\infty} \widetilde S_{n}=-\infty\quad a.s..
\end{equation*}
\end{lem}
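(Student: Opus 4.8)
The plan is to realize $\widetilde S$, $\widecheck S$ and $\widehat S$ simultaneously on a common probability space via a coupling argument identical in spirit to that used in the proof of Lemma~\ref{comp}, and then to read off the conclusion from Lemma~\ref{eqrec} and Theorem~\ref{RW}. The key observation driving the construction is that the grafted tree $\widetilde{\mathcal C}$ refines the double infinite comb: every leaf $\tilde c\in\mathcal C_{c}$ lies below some finite comb-leaf $c\in\mathcal C$, so that whenever the walk $\widetilde S$ has just completed a run of $n$ letters $\ttu$ (resp.\ $\ttd$), its true probability of switching direction, namely $q_{\tilde c}(\ttd)$ for the relevant $\tilde c\in\mathcal C_{\ttu^{n}\ttd}$, is sandwiched between $\widehat\alpha_{n}^{\ttu}$ and $\widecheck\alpha_{n}^{\ttu}$ by the very definitions of the infimum and supremum. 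The point is that the comb walks $\widecheck S$ and $\widehat S$ provide, run by run, the extreme switching probabilities compatible with the graft.

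First I would make the probabilistic content of this sandwiching precise. Working under the conditioning~\eqref{condition} so that the initial time is an up-down breaking time, I would generate all three walks step by step from a single stream of i.i.d.\ uniform random variables on $[0,1]$, one uniform attached to each time step. At each step, given the current run length $n$ in the current direction, each of the three walks decides to switch or to persist by comparing its own switching probability against the common uniform. Because $\widehat\alpha_{n}^{\ttu}\le q_{\tilde c}(\ttd)\le\widecheck\alpha_{n}^{\ttu}$ along rises and the reversed inequality $\widehat\alpha_{n}^{\ttd}\ge q_{\tilde c}(\ttu)\ge\widecheck\alpha_{n}^{\ttd}$ along descents, the walk $\widecheck S$ is the most reluctant to stop a rise and the most eager to stop a descent, while $\widehat S$ is the opposite; consequently, on this coupling one obtains the almost sure pathwise ordering $\widehat S_{n}\le\widetilde S_{n}\le\widecheck S_{n}$ for all $n\ge 1$. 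This is the exact analogue of the inequality~\eqref{comp2}, the only genuine novelty being that the bounding walks are built from the comb-level extreme transitions rather than from a literal second comb.

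With the pathwise sandwich in hand the two implications follow immediately. If $\limsup_{n}\widecheck S_{n}=\infty$ a.s., then by $\widetilde S_{n}\ge\widehat S_{n}$ this is not quite what is needed; rather I would use the \emph{other} side of the sandwich together with Lemma~\ref{eqrec}: the hypothesis that $\widecheck S$ and $\widehat S$ share the same type means, by Theorem~\ref{RW} and Lemma~\ref{eqrec}, that either both are oscillating (so $\limsup\widecheck S_{n}=\infty=-\liminf\widehat S_{n}$) or both drift the same way. In the oscillating case, $\widetilde S_{n}\le\widecheck S_{n}$ forces $\limsup\widetilde S_{n}\le\limsup\widecheck S_{n}$, but the matching lower bound $\widetilde S_{n}\ge\widehat S_{n}$ with $\liminf\widehat S_{n}=-\infty$ gives $\liminf\widetilde S_{n}=-\infty$, so $\widetilde S$ oscillates and is recurrent; in the drifting case the common drift direction of the two envelopes pins $\widetilde S$ between them and forces the same drift. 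Thus the two displayed implications hold, and Lemma~\ref{eqrec} converts the drifting/oscillating verdict into the claimed recurrence/transience.

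The main obstacle, and the step deserving the most care, is the precise verification that the coupling produces the pathwise ordering \emph{run by run} when the run lengths of the three walks may a priori diverge: once two of the walks disagree about whether to switch at some step, their subsequent run lengths decouple, and one must check that the comparison of switching probabilities indexed by each walk's \emph{own} current run length still yields a monotone coupling. I expect this to go through exactly as in Lemma~\ref{comp}, because the ordering is preserved at each renewal of a run and the extreme definitions of $\widecheck\alpha^{\ell}_{n}$ and $\widehat\alpha^{\ell}_{n}$ hold uniformly over $n$; nevertheless this is the place where a fully rigorous argument must track the induction on breaking times carefully rather than merely invoke the monotone rearrangement of inverse distribution functions. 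Since the statement explicitly allows a well-defined drift here, I would also note that no appeal to the undefined-drift machinery of Section~\ref{und-drift} is required: the argument rests solely on the pathwise coupling, Theorem~\ref{RW}, and Lemma~\ref{eqrec}.
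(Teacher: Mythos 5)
Your overall strategy --- realize the three walks on one probability space so that the grafted walk is sandwiched between the two comb walks, then read off the conclusions by monotonicity --- is exactly the route the paper takes (its proof is precisely ``the same coupling argument as in the proof of Lemma~\ref{comp}''). However, you invert the direction of the sandwich, and this inversion is fatal. You correctly record the inequalities $\widehat\alpha_{n}^{\ttu}\le q_{\tilde c}(\ttd)\le\widecheck\alpha_{n}^{\ttu}$ and $\widecheck\alpha_{n}^{\ttd}\le q_{\tilde c}(\ttu)\le\widehat\alpha_{n}^{\ttd}$, but then misread them: since $\widecheck\alpha_{n}^{\ttu}$ is the \emph{supremum} of the switching probabilities along rises and $\widecheck\alpha_{n}^{\ttd}$ the \emph{infimum} along descents, the walk $\widecheck S$ is the most \emph{eager} to stop a rise and the most reluctant to stop a descent --- not, as you write, the opposite. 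Hence $\widecheck S$ is the \emph{lower} envelope and $\widehat S$ the \emph{upper} one; the correct coupled ordering is $\widecheck S_{n}\le\widetilde S_{n}\le\widehat S_{n}$, consistent with \eqref{comp3} (larger $\alpha^{\ttu}$ together with smaller $\alpha^{\ttd}$ produces a smaller walk). With this ordering the two displayed implications are immediate: $\limsup_{n}\widecheck S_{n}=\infty$ forces $\limsup_{n}\widetilde S_{n}=\infty$ because $\widetilde S$ dominates $\widecheck S$, and $\liminf_{n}\widehat S_{n}=-\infty$ forces $\liminf_{n}\widetilde S_{n}=-\infty$ because $\widetilde S$ is dominated by $\widehat S$. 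With your reversed ordering neither implication can be extracted --- which is why your third paragraph is driven to invoke the same-type hypothesis, even though the displayed implications in the statement are unconditional and must be proved without it.

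Moreover, the patch itself rests on a false inference: from $\widetilde S_{n}\ge\widehat S_{n}$ and $\liminf_{n}\widehat S_{n}=-\infty$ you conclude $\liminf_{n}\widetilde S_{n}=-\infty$, but a lower bound diverging to $-\infty$ says nothing about the walk above it (take $\widetilde S_{n}=n$ and $\widehat S_{n}=-n$). Once the ordering is corrected, the rest of what you wrote does go through: the run-by-run monotone coupling with one uniform per step, each walk comparing it against its own switching probability indexed by its own current run length (your ``main obstacle'' paragraph identifies the right point of care, one the paper leaves implicit), then the two implications, then the type discussion. One last caveat: Lemma~\ref{eqrec} is proved for comb walks, whose skeleton $M$ has \emph{i.i.d.\@} increments; for the grafted walk $\widetilde S$ the rises and descents are no longer independent, so you should deduce recurrence of $\widetilde S$ from oscillation directly --- a $\pm 1$-step walk with $\limsup_{n}\widetilde S_{n}=+\infty$ and $\liminf_{n}\widetilde S_{n}=-\infty$ crosses, hence visits, every site of $\Zset$ infinitely often --- rather than by citing Lemma~\ref{eqrec} for $\widetilde S$.
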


As an application, for instance, we can extend our results to some non-degenerated (in some sense) probabilized context trees built from a finite number of grafts from the double infinite comb.

\subsubsection*{Example of finite number of non-degenerated grafts}

{\it Consider a persistent random walk $S$ built from a double infinite comb with an undefined drift and our original assumptions. Then perturb that context tree attaching a finite number of trees $\mathbb T_{c}$, {\it i.e.\@}
\begin{equation*}
\mathsf{card}(\{c\in\mathcal C : \mathbb T_{c}\neq \emptyset\})<\infty.
\end{equation*}
Assume furthermore that the grafts satisfies, for all $n\geq 1$ such that $\mathbb T_{\ttu^{n}\ttd}\neq \emptyset$ or $\mathbb T_{\ttd^{n}\ttu}\neq \emptyset$,
\begin{equation*}
\widecheck \alpha_{n}^{\ttu}<1\quad\mbox{and}\quad {\widehat \alpha}_{n}^{\ttd}<1
\end{equation*}
Then the resulting persistent random walk $\widetilde S$ is of the same type as $S$. For example, the latter condition is satisfied when the probabilized context trees $\mathbb T_{c}$ are finite and their attached Bernoulli distributions are non-degenerated. Note that in that case, the random walk is particularly persistent in the sense that the rises and descents are no longer independent. A renewal property persists but is heavier to write.}

\bibliography{biblio-vnby}
\bibliographystyle{unsrturl}
\end{document}